\newtheorem{theorem}{Theorem}[section]
\newtheorem{proposition}[theorem]{Proposition}
\theoremstyle{definition}
\newtheorem{definition}[theorem]{Definition}
\newtheorem{example}[theorem]{Example}
\newtheorem{remark}[theorem]{Remark}
\begin{document}

\title[]{Dehn colorings and vertex-weight invariants for spatial graphs}

\author[K.~Oshiro]{Kanako Oshiro}
\address{Department of Information and Communication Sciences, Sophia University, Tokyo 102-8554, Japan}
\email{oshirok@sophia.ac.jp}

\author[N.~Oyamaguchi]{Natsumi Oyamaguchi}
\address{Department of Teacher Education, Shumei University, Chiba 276-0003, Japan}
\email{p-oyamaguchi@mailg.shumei-u.ac.jp}

\keywords{spatial graphs, Dehn colorings, vertex-weight invariants}

\subjclass[2010]{57M27, 57M25}

\date{\today}

\maketitle

\begin{abstract}
In this paper, we study Dehn colorings for spatial graphs, and give a family of spatial graph invariants that are called {\it vertex-weight invariants}.
We give some examples of spatial graphs that can be distinguished by a vertex-weight invariant, whereas distinguished by neither their constituent links nor the number of Dehn colorings.
\end{abstract}

\section*{Introduction}
Fox colorings for classical links have been used by various studies in knot theory, see \cite{HararyKauffman, Przytycki95, Satoh07} for example. 
In \cite{IshiiYasuhara97}, Fox colorings for spatial graph diagrams were studied with two kinds of vertex conditions, 
and in \cite{Oshiro12}, vertex conditions for Fox colorings of spatial graph diagrams were completely classified with ``some invariants for an equivalence relation on $\sum_{n\in 2\mathbb Z_{+}}\mathbb Z_p^n$".
We note that the classification gives the maximum generalization for Fox colorings of spatial graph diagrams unless we change the fundamental definition that a {\it Fox $p$-coloring} of a diagram $D$  is a map $C:\{\mbox{arcs of $D$}\}\to \mathbb Z_p$ satisfying the crossing condition depicted in Figure~\ref{arccoloring}.  
Dehn colorings, namely region colorings by $\mathbb Z_p$, for classical links have been also studied in knot theory, see \cite{CarterSilverWilliams13, MadausNewmanRussell17, Niebrzydowski0} for example. In particular, in \cite{CarterSilverWilliams13}, some relation between Fox colorings and Dehn colorings was given.

In this paper, we study Dehn colorings for spatial graph diagrams, and we discuss about ``some invariants for an equivalence relation on $\sum_{n\in 2\mathbb Z_{+}}\mathbb Z_p^n$" related to Dehn colorings of spatial graph diagrams.
Note that as in the case of Fox colorings of spatial graph diagrams, the invariants can be used for the classification of vertex conditions for Dehn colorings of spatial graph diagrams, which will be studied in our next paper \cite{OshiroOyamaguchi20(2)}.
Furthermore, we show each invariant for the equivalence relation on $\sum_{n\in 2\mathbb Z_{+}}\mathbb Z_p^n$ gives a spatial graph invariant called a {\it vertex-weight invariant}. We give some examples of spatial graphs that can be distinguished by a vertex-weight invariant, whereas distinguished by neither their constituent links nor the number of Dehn colorings. Note that the notion of a vertex-weight invariant discussed in this paper can be also applied for the case of Fox colorings of spatial graphs.
\begin{figure}[h]
  \begin{center}
    \includegraphics[clip,width=3cm]{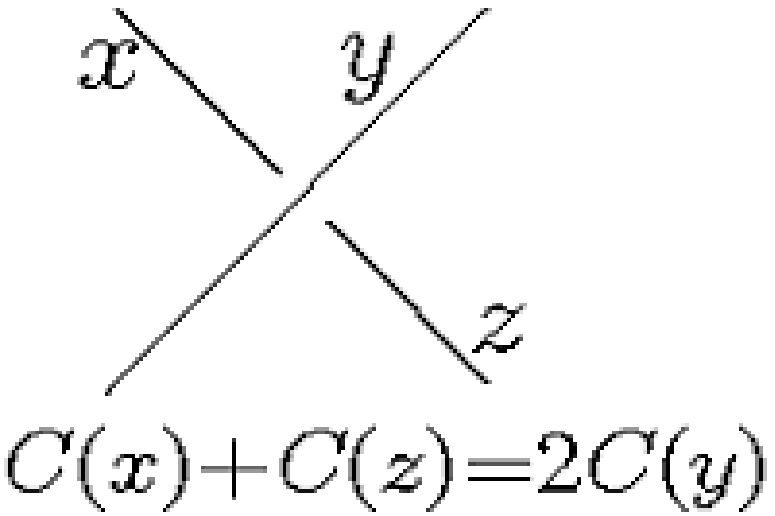}
    \caption{}
    \label{arccoloring}
  \end{center}
\end{figure}

This paper is organized as follows:
In Section~1, we introduce an equivalence relation on $\sum_{n\in 2\mathbb Z_{+}}\mathbb Z_p^n$, and discuss some invariants under the equivalence relation.
In Section~2, we review the definitions of spatial graphs and their diagrams. 
In Section~3, a Dehn coloring of a spatial Euler graph diagram is defined. 
Section~4 is devoted to the study of vertex-weight invariants of spatial Euler graphs, and presents some example of spatial Euler graphs that can be distinguished by a vertex-weight invariant, whereas distinguished by neither their constituent links nor the number of Dehn colorings.
Section~5 deals with the case of spatial graphs each of which includes an odd-valent vertex, and presents some example of spatial graphs that can be distinguished by a vertex-weight invariant, whereas distinguished by neither their constituent links nor the number of Dehn colorings.

\section{Invariants of an equivalence relation on $\sum_{n\in 2\mathbb Z_{+}}\mathbb Z_p^n$}
Throughout this paper, $\mathbb Z_+$ means the set of positive integers, $\mathbb Z_{\geq q}$ means the set of integers greater than or equal to $q$, and $\mathbb Z_p=\{0,1, \ldots, p-1\}$ means the cyclic group $\mathbb Z/p\mathbb Z$.

From now on, let $p\in \mathbb Z_{\geq 2}$, and  put $U_p=\bigcup_{n\in 2\mathbb Z_+} \mathbb Z^n_p$.

\begin{definition}\label{def:Requiv}
Two elements $\displaystyle \boldsymbol{a}, \boldsymbol{b} \in U_p$ are {\it equivalent} ($\boldsymbol{a} \sim \boldsymbol{b}$) if $\boldsymbol{a}$ and $ \boldsymbol{b}$  are related by a finite sequence of the following transformations:
\begin{itemize}
\item[(Op1)] $(a_1,  \ldots , a_n) \longrightarrow (a_2, \ldots , a_n, a_1 )$,
\item[(Op2)] $(a_1,  \ldots , a_n) \longrightarrow (a, a_2+(-1)^2(a_1-a), \ldots , a_i +(-1)^i (a_1-a), \ldots ,  a_n +(-1)^n(a_1-a) )$ for $a\in \mathbb Z_p$,
\item[(Op3)] $(a_1,  \ldots , a_n) \longrightarrow (a, a_1-a_2 + a, \ldots ,  a_1-a_i + a, \ldots ,  a_1 -a_n+a )$ for $a\in \mathbb Z_p$,
\item[(Op4)]  $(a_1,  \ldots , a_n) \longrightarrow (a_1, -a_1 + a_2 + a_3, a_3 , \ldots  , a_n )$ when $n>3$.
\end{itemize}
\end{definition}

\begin{remark}
The inverse of (Op1) is (Op1)${}^{n-1}$. The inverse of (Op2)  is (Op2) for $a=a_1 \in \mathbb Z_p$. The inverse of (Op3)  is (Op3) for $a=a_1 \in \mathbb Z_p$. The inverse of (Op4)  is (Op4)${}^{p-1}$. 
\end{remark}

Put  $\bm{a}=(a_1, \ldots ,a_n)$. We define $\displaystyle \tau_p: U_p \longrightarrow \mathbb{Z}$ by
\[
\tau_p \big( \bm{a} \big)=\max\left\{ k \in \{1,\ldots,p\} ~ {\Bigg |} ~
\begin{array}{l}
 \ k|p, \\
 a_1+a_2 \equiv a_2+a_3 \equiv \cdots \equiv a_n+a_1 \pmod{k}
 \end{array}
  \right\}.
  \]

Suppose $p$ is an even integer. 
We define
$\displaystyle \varepsilon_p: U_p \longrightarrow \mathbb{Z} \cup \{\infty\}$ by
\[
\varepsilon_p \big( \bm{a}\big)=
\begin{cases}
0 & \mbox{if } a_1+a_2 \equiv \cdots \equiv a_n+a_1 \equiv 0  \pmod{2},\\
1 & \mbox{if } a_1+a_2 \equiv  \cdots \equiv a_n+a_1 \equiv 1  \pmod{2},\\
\infty & {\rm otherwise.}
\end{cases}
\]
We define
$\displaystyle \mu_p: U_p \longrightarrow \mathbb{Z}$ by
\[
\mu_p \big( \bm{a} \big)=E( a_1+a_2, \ldots, a_n+a_1) -O  (a_1+a_2, \ldots ,a_n+a_1),
\]
where 
\[E\big( \bm{a}\big)=\# \{ i \in \{1,\ldots, n\} \mid a_i \equiv 0 \pmod{2}\}\]
 and
\[O\big( \bm{a} \big)=\# \{ i \in \{1,\ldots, n\} \mid a_i \equiv 1 \pmod{2}\}.\]
For $\tau \in \{1, \ldots ,p\}$ such that $\tau \equiv 0 \pmod{2}$, $\tau|p$ and $\displaystyle \frac{p}{\tau} \equiv 0 \pmod{2}$, define $\displaystyle \mu_{p, \tau}: U_p \longrightarrow \mathbb{Z} \cup\{\infty\}$ by
\[
\mu_{p,\tau}\big( \bm{a} \big)
=
\begin{cases}\displaystyle
~ {\Bigg |} ~
\mu_{\frac{p}{\tau}}  \Big( \frac{a_{1}-a_1}{\tau}, \ldots , \frac{a_{2j-1}-a_1}{\tau},\frac{a_{2j}-a_2}{\tau}, \ldots , \frac{a_n-a_2}{\tau} \Big) 
~ {\Bigg |} & \mbox{if } \tau_p(\bm{a})=\tau,\\
\hspace{30mm} \infty & {\rm otherwise.}
\end{cases}
\]

\begin{remark}
$\mu_{p, \tau}$ is well-defined since $a_{2j-1}-a_1 \equiv 0 \pmod{\tau}$ and $a_{2j}- a_2 \equiv 0 \pmod{\tau}$ for $j \in \{2, \ldots ,\frac{n}{2}\}$ when $\tau_p\big( \bm{a} \big)=\tau$.
\end{remark}

\noindent We have the following theorems.

\begin{theorem}
Let ${\bm a}$ and ${\bm b}$ be two equivalent elements of $U_p$. Then it holds that $\tau_p(\bm{a})=\tau_p(\bm{b})$.
\end{theorem}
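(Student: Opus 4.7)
The plan is to use that the equivalence relation is generated by (Op1)--(Op4), so it suffices to show $\tau_p$ is invariant under each of these four elementary moves individually. For each divisor $k$ of $p$, let $\mathcal{C}_k(\bm a)$ denote the condition $a_1+a_2 \equiv a_2+a_3 \equiv \cdots \equiv a_n+a_1 \pmod k$; then $\tau_p(\bm a)$ is the largest $k \mid p$ for which $\mathcal{C}_k(\bm a)$ holds, so it is enough to check $\mathcal{C}_k(\bm a) \Leftrightarrow \mathcal{C}_k(\bm b)$ for every $k \mid p$ whenever $\bm a$ and $\bm b$ differ by a single operation.

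A useful reformulation simplifies the bookkeeping. Setting $s_i = a_i + a_{i+1}$ with indices cyclic mod $n$, one has $s_i - s_{i+1} = a_i - a_{i+2}$, so $\mathcal{C}_k(\bm a)$ is equivalent to $a_i \equiv a_{i+2} \pmod k$ for every $i$. Since $n$ is even, both cyclic chains $a_1, a_3, \ldots, a_{n-1}$ and $a_2, a_4, \ldots, a_n$ close up, so $\mathcal{C}_k$ is equivalent to requiring that all odd-indexed entries are mutually congruent mod $k$ and all even-indexed entries are mutually congruent mod $k$, with no constraint linking the two parity classes.

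With this reformulation each move is quick to verify. For (Op1), the cyclic shift simply interchanges the two parity classes of indices, preserving both congruence conditions. For (Op2), setting $c = a_1 - a$, the new entries satisfy $a_i' = a_i - c$ for odd $i$ and $a_i' = a_i + c$ for even $i$, i.e.\ a uniform shift within each parity class, which clearly preserves $\mathcal{C}_k$. For (Op3), a direct computation gives $a_1' = a$ and $a_i' = a_1 - a_i + a$ for $i \geq 2$; under $\mathcal{C}_k(\bm a)$ the odd-indexed entries of $\bm a'$ all reduce mod $k$ to $a$, and the even-indexed entries all reduce to $a_1 - a_2 + a$, so $\mathcal{C}_k(\bm a')$ holds. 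The converse uses that the inverse of (Op3) is itself an (Op3) move (with parameter $a_1$), so the same argument applies in reverse. For (Op4), only $a_2$ changes: $a_2' = a_2 + (a_3 - a_1)$. If $\mathcal{C}_k(\bm a)$ holds then $a_3 \equiv a_1 \pmod k$, so $a_2' \equiv a_2$ and the even-indexed congruence persists; the odd-indexed entries are untouched. The reverse implication follows from iterating the same argument, using that the inverse of (Op4) is (Op4)$^{p-1}$.

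The main conceptual step is (Op3): here the sums $s_i$ are not fixed but are reflected to $2(a_1+a) - s_i$, and the key observation is that this reflection is \emph{uniform across $i$}, so ``all sums congruent mod $k$'' is preserved even as the common residue shifts. The remaining three moves either preserve each $s_i$ outright or require only a one-line check within each parity class, and assembling the four cases yields $\tau_p(\bm a) = \tau_p(\bm b)$.
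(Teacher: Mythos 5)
Your proof is correct, and its skeleton is the same as the paper's: reduce to invariance under a single application of each of (Op1)--(Op4), and for each divisor $k$ of $p$ check that the chain of congruences $a_1+a_2\equiv a_2+a_3\equiv\cdots\equiv a_n+a_1 \pmod{k}$ is preserved. The difference lies in the key device. The paper manipulates the consecutive sums directly, rewriting each new chain in terms of the old one (e.g.\ for (Op3) each sum becomes $2a_1-(a_i+a_{i+1})+2a$, and for (Op4) the affected sums are adjusted by $-(a_1+a_2)+(a_2+a_3)$), whereas you first prove a reformulation: since $n$ is even, the condition mod $k$ is equivalent to all odd-indexed entries being mutually congruent and all even-indexed entries being mutually congruent mod $k$, with no constraint between the two classes. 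This lemma makes the four case checks nearly immediate --- (Op1) swaps the parity classes, (Op2) is a uniform shift within each class, (Op3) sends each class to a single residue, and (Op4) changes $a_2$ by $a_3-a_1\equiv 0$ --- and your treatment of the reverse implications via the explicit inverses ((Op3) with parameter $a_1$, and (Op4)${}^{p-1}$) is sound. The paper avoids the preliminary lemma at the cost of longer congruence manipulations; your route isolates the structural reason the invariance holds (the condition only ties together entries of the same parity), which also makes transparent the paper's later remark on the well-definedness of $\mu_{p,\tau}$.
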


\begin{proof}
It suffices to show that $\tau_p$ is an invariant under the transformations  (Op1)-(Op4)  in Definition~\ref{def:Requiv}.

(Op1) It is easy to see that
\[
\tau_p(a_2, \ldots , a_n, a_1)=\tau_p(a_1, \ldots , a_n)
\]
 for $(a_1, \ldots , a_n) \in U_p$.

(Op2) For $(a_1, \ldots , a_n) \in U_p$, $a \in \mathbb{Z}_p$ and $k \in \{1, 2,\ldots, p\}$ such that $k|p$, we have
\begin{align*}
&a_1+a_2 \equiv a_2+a_3 \equiv \cdots \equiv a_n+a_1 \pmod{k}\\
&\iff a+a_2+(-1)^2(a_1-a) \equiv  \cdots \equiv a_i+(-1)^i(a_1-a)+a_{i+1}+(-1)^{i+1}(a_1-a)\\
& \hspace{10mm} \equiv \cdots \equiv  a_n+(-1)^n(a_1-a)+a \pmod{k}.
\end{align*}
Hence 
\[\tau_p\big(a_1, \ldots , a_n\big)=\tau_p\big(a, a_2+(a_1-a), \ldots , a_i+(-1)^i(a_1-a), \ldots , a_{n}+(a_1-a)\big)
\]
 holds.

(Op3) For $(a_1, \ldots , a_n) \in U_p$, $a \in \mathbb{Z}_p$ and $k \in \{1, 2,\ldots, p\}$ such that $k|p$, we have
\begin{align*}
&a_1+a_2 \equiv a_2+a_3 \equiv \cdots \equiv a_n+a_1 \pmod{k}\\
&\iff 2a_1-(a_1+a_2)+2a \equiv  \cdots \equiv 2a_1-(a_i+a_{i+1})+2a\\
& \hspace{10mm} \equiv \cdots \equiv  2a_1-(a_n+a_1)+2a \pmod{k}.\\
&\iff a+(a_1-a_2+a) \equiv  \cdots \equiv (a_1-a_i+a)+(a_1-a_{i+1}+a)\\
& \hspace{10mm} \equiv \cdots \equiv  (a_1-a_n+a)+a \pmod{k}.
\end{align*}
Hence 
\[\tau_p(a_1, \ldots , a_n)=\tau_p(a, a_1-a_2+a, \ldots , a_1-a_i+a, \ldots ,a_1-a_n+a)
\]
 holds.
 
 (Op4) For $(a_1, \ldots , a_n) \in \displaystyle U_p$ and $k \in \{1, 2,\ldots, p\}$ such that $n>3$ and $k|p$, since 
 \[a_1+a_2 \equiv a_2+a_3 \pmod{k} \iff -(a_1+a_2)+(a_2+a_3) \equiv 0 \pmod{k},
 \]
we have
\begin{align*}
& a_1+a_2 \equiv a_2+a_3 \equiv \cdots \equiv a_n+a_1 \pmod{k}\\
&\iff a_1+a_2+\{-(a_1+a_2)+(a_2+a_3)\} \equiv a_2+a_3+\{-(a_1+a_2)+(a_2+a_3)\}\\
&\hspace{9mm} \equiv a_3 + a_4 \equiv \cdots \equiv a_n+a_1 \pmod{k}\\ 
&\iff a_2+a_3 \equiv (-a_1+a_2+a_3)+a_3 \equiv a_3+a_4 \equiv \cdots \equiv a_n+a_1 \pmod{k}\\ 
&\iff a_1+(-a_1+a_2+a_3) \equiv (-a_1+a_2+a_3)+a_3 \equiv \cdots \equiv a_n+a_1 \pmod{k}.
\end{align*}
Hence 
\[\tau_p(a_1, \ldots , a_n)=\tau_p(a_1, -a_1 + a_2 + a_3, a_3 , \ldots , a_n )
\]
 holds.
 \end{proof}

\begin{theorem}Suppose $p$ is an even integer.
Let ${\bm a}$ and ${\bm b}$ be two equivalent elements of $U_p$. Then it holds that $\varepsilon_p(\bm{a})=\varepsilon_p(\bm{b})$.
\end{theorem}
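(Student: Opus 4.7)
The plan is to mimic the structure of the previous theorem and verify that $\varepsilon_p$ is invariant under each of the moves (Op1)--(Op4). Since $\varepsilon_p(\bm a)$ depends only on the parities of the $n$ cyclic sums $a_1+a_2,\, a_2+a_3,\, \ldots,\, a_n+a_1$ (and since $p$ is even, so that parity of elements of $\mathbb Z_p$ is well defined), it suffices to show that under each operation the multiset of these sums, taken modulo $2$, is preserved.

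For (Op1), a cyclic rotation obviously leaves the cyclic multiset of sums unchanged, so $\varepsilon_p$ is invariant. For (Op2), which sends $(a_1,\ldots,a_n)\mapsto (a,\, a_2+(a_1-a),\, a_3-(a_1-a),\,\ldots,\, a_n+(-1)^n(a_1-a))$, a direct computation shows that the $i$-th cyclic sum becomes $a_i+a_{i+1}$ exactly (not just mod $2$). The one subtle point is the wrap-around sum from position $n$ back to position $1$; it equals $[a_n+(-1)^n(a_1-a)]+a$, which reduces to $a_n+a_1$ precisely because $n$ is even (as required by $\bm a\in U_p$). Thus $\varepsilon_p$ is preserved.

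For (Op3), which sends $(a_1,\ldots,a_n)\mapsto(a,\, a_1-a_2+a,\, \ldots,\, a_1-a_n+a)$, the $i$-th new cyclic sum is either of the form $2a+a_1-a_j$ or of the form $2a_1-(a_j+a_{j+1})+2a$; in every case the terms $2a$ and $2a_1$ vanish modulo $2$, and the surviving expression is congruent to the corresponding $a_j+a_{j+1}$ modulo $2$. The wrap-around sum is again handled using $n$ even. Hence the parities of the cyclic sums are preserved and $\varepsilon_p$ is invariant. For (Op4), only the first two cyclic sums change: the $(1,2)$-sum becomes $a_1+(-a_1+a_2+a_3)=a_2+a_3$, and the $(2,3)$-sum becomes $(-a_1+a_2+a_3)+a_3\equiv a_1+a_2\pmod 2$; all other cyclic sums (including the wrap-around) are identical. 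So the two sums simply interchange their parities, the multiset of parities is unchanged, and $\varepsilon_p$ is preserved.

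The computations are entirely routine; the only mild obstacle is keeping track of the wrap-around sum in (Op2) and the sign parities $(-1)^i$, which work out cleanly thanks to the standing hypothesis that $n$ is even.
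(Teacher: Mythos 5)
Your proposal is correct and follows essentially the same route as the paper: it verifies invariance of $\varepsilon_p$ under (Op1)--(Op4) by checking that each cyclic sum $a_i+a_{i+1}$ is preserved modulo $2$ (exactly for (Op2), with the first two sums swapping parities under (Op4)). One harmless slip: in (Op3) the wrap-around sum $(a_1-a_n+a)+a\equiv a_1+a_n\pmod 2$ needs no parity of $n$, only that $2a\equiv 0$; the evenness of $n$ is only needed for the signs in (Op2).
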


\begin{proof}
It suffices to show that $\varepsilon_p$ is an invariant under the transformations (Op1)-(Op4) in Definition~\ref{def:Requiv}.

(Op1) It is easy to see that 
\[
\varepsilon_p(a_2, \ldots , a_n, a_1)=\varepsilon_p(a_1, \ldots , a_n)
\]
 for $(a_1, \ldots , a_n) \in \displaystyle U_p$.

(Op2) For $(a_1, \ldots , a_n) \in  U_p$ and $a \in \mathbb{Z}_p$, we have
\begin{align*}
&\varepsilon_p(a_1, \ldots , a_n)=\varepsilon \\
&\iff {}^\forall i \in \{1, \ldots , n\}, a_i+a_{i+1} \equiv \varepsilon \pmod{2}\\
&\iff {}^\forall i \in \{1, \ldots , n\}, \big(a_i+(-1)^i (a_1-a)\big)+\big(a_{i+1}+(-1)^{i+1} (a_1-a)\big) \equiv \varepsilon \pmod{2}\\
&\iff \varepsilon_p\big(a, a_2+(-1)^2(a_1-a), \ldots , a_i+(-1)^i(a_1-a), \ldots , a_{n}+(-1)^n(a_1-a)\big)=\varepsilon
\end{align*}
for $\varepsilon \in\{0,1\}$, where $a_{n+1}=a_1$.

(Op3) For $(a_1, \ldots , a_n) \in U_p$ and $a \in \mathbb{Z}_p$, we have
\begin{align*}
&\varepsilon_p(a_1, \ldots , a_n)=\varepsilon\\
&\iff {}^\forall i \in \{1, \ldots , n\}, a_i+a_{i+1} \equiv \varepsilon \pmod{2}\\
&\iff {}^\forall i \in \{1, \ldots , n\}, \big(a_1-a_i+a\big)+\big(a_1-a_{i+1}+a\big) \equiv a_i+a_{i+1} \equiv \varepsilon \pmod{2}\\
&\iff \varepsilon_p (a, a_1-a_2+a, \ldots , a_1-a_i+a, \ldots ,a_1-a_n+a)=\varepsilon
\end{align*}
for $\varepsilon \in\{0,1\}$, where $a_{n+1}=a_1$.

(Op4) For $(a_1, \ldots , a_n) \in \displaystyle U_p$ such that $n>3$, we have
\begin{align*}
&\varepsilon_p(a_1, \ldots , a_n)=\varepsilon\\
&\iff {}^\forall i \in \{1, \ldots , n\}, a_i+a_{i+1} \equiv \varepsilon \pmod{2}\\
&\iff a_1+(-a_1+a_2+a_3)=a_2+a_3 \equiv \varepsilon \pmod{2},\\
& \hspace{10mm} (-a_1+a_2+a_3)+a_3 \equiv a_1+a_2 \equiv \varepsilon \pmod{2},\\
& \hspace{10mm}  {}^\forall i \in \{3, \ldots , n\}, a_i+a_{i+1} \equiv \varepsilon \pmod{2}\\
&\iff \varepsilon_p (a_1, -a_1+a_2+a_3, a_3, \ldots, a_n)=\varepsilon
\end{align*}
for $\varepsilon \in\{0,1\}$, where $a_{n+1}=a_1$.

\end{proof}

\begin{theorem}
Suppose $p$ is an even integer.
Let ${\bm a}$ and ${\bm b}$ be two equivalent elements of $U_p$. Then it holds that $\mu_p(\bm{a})=\mu_p(\bm{b})$.
\end{theorem}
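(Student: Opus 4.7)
The plan mirrors the proofs of the previous two theorems: verify that $\mu_p$ is invariant under each of the four operations (Op1)--(Op4). The key observation is that $\mu_p(\bm{a})=E(a_1+a_2,\ldots,a_n+a_1)-O(a_1+a_2,\ldots,a_n+a_1)$ depends only on the multiset of parities of the consecutive sums $a_i+a_{i+1}$ (indices taken cyclically modulo $n$). Hence it suffices to show that each of (Op1)--(Op4) preserves this multiset of parities, and in fact for most operations I expect to verify the stronger statement that each individual sum $a_i+a_{i+1}$ is preserved either exactly or up to a shift by an even number.

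For (Op1), cyclic rotation obviously preserves the multiset $\{a_i+a_{i+1}\}_{i=1}^n$ itself, so a fortiori the parities. For (Op2), I would compute directly that $a'_i+a'_{i+1}=a_i+a_{i+1}$ as elements of $\mathbb{Z}_p$ for $1\le i\le n-1$, because the alternating factors $(-1)^i$ and $(-1)^{i+1}$ cancel. The only nontrivial case is the cyclic boundary sum $a'_n+a'_1$: here the sign $(-1)^n$ combines with the outer value $a$ to yield $a_n+a_1$, using crucially that $n$ is even. Thus every consecutive sum is preserved individually, not merely its parity.

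For (Op3), the new consecutive sums are $2a_1+2a-(a_i+a_{i+1})$ for the interior positions, together with the analogous two-term expressions at the endpoints, all of which are congruent to $a_i+a_{i+1}$ modulo $2$. For (Op4), only the pair $(a'_1+a'_2,\,a'_2+a'_3)$ differs from $(a_1+a_2,\,a_2+a_3)$: direct computation gives $a'_1+a'_2=a_2+a_3$ and $a'_2+a'_3=-a_1+a_2+2a_3\equiv a_1+a_2\pmod 2$, which is just a swap of the old pair in $\mathbb{Z}_2$. The remaining sums $a_3+a_4,\ldots,a_n+a_1$ are untouched. Hence in both (Op3) and (Op4) the multiset of parities is preserved.

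Since $E$ and $O$ are determined by this multiset, they are invariant under every one of (Op1)--(Op4), and therefore $\mu_p(\bm{a})=\mu_p(\bm{b})$. I do not anticipate a real obstacle; the argument is a sequence of short direct computations essentially parallel to those used for $\varepsilon_p$, with the only mild subtlety being the appeal to $n$ being even in (Op2) to keep the cyclic boundary sum unchanged.
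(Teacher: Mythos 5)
Your proposal is correct and follows essentially the same route as the paper: both verify invariance under (Op1)--(Op4) by directly computing the new cyclic consecutive sums, observing that they are preserved exactly under (Op1) and (Op2) (the latter using that $n$ is even), shifted by an even quantity under (Op3), and swapped up to parity under (Op4), which leaves the counts $E$ and $O$, and hence $\mu_p$, unchanged.
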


\begin{proof}
It suffices to show that $\mu_p$ is an invariant under the transformations  (Op1)-(Op4)  in Definition~\ref{def:Requiv}.

(Op1) It is easy to see that 
\[\mu_p (a_2, \ldots , a_n, a_1) =\mu_p(a_1, \ldots , a_n) 
\]
 for $(a_1, \ldots , a_n) \in \displaystyle U_p $.

(Op2) For $(a_1, \ldots , a_n) \in \displaystyle U_p$ and $a \in \mathbb{Z}_p$, we have
\begin{align*}
&\mu_p(a_1, \ldots , a_n)\\
&=E(a_1+a_2, \ldots ,a_n+a_1)-O(a_1+a_2, \ldots ,a_n+a_1)\\
&=E\big(a+a_2+(a_1-a), \ldots ,(a_i+(-1)^i(a_1-a))+(a_{i+1}+(-1)^{i+1}(a_1-a)), \\
&\hspace{9cm}\ldots, a_n+(a_1-a)+a\big)\\
& \quad -O\big(a+a_2+(a_1-a), \ldots ,(a_i+(-1)^i(a_1-a))+(a_{i+1}+(-1)^{i+1}(a_1-a)), \\
&\hspace{9cm} \ldots, a_n+(a_1-a)+a\big)\\
& =\mu_p(a, a_2+(a_1-a), \ldots , a_i +(-1)^i (a_1-a), \ldots ,  a_n +(a_1-a) ).
\end{align*}

(Op3) For $(a_1, \ldots , a_n) \in \displaystyle U_p$ and $a \in \mathbb{Z}_p$, we have
\begin{align*}
&\mu_p(a_1, \ldots , a_n)\\
&=E(a_1+a_2, \ldots ,a_n+a_1)-O(a_1+a_2, \ldots ,a_n+a_1)\\
&=E\big(2a_1-(a_1+a_2)+2a, \ldots ,2a_1-(a_i+a_{i+1})+2a, \ldots, 2a_1-(a_n+a_1)+2a\big)\\
& \quad -O\big(2a_1-(a_1+a_2)+2a, \ldots ,2a_1-(a_i+a_{i+1})+2a, \ldots, 2a_1-(a_n+a_1)+2a\big)\\
&=E\big(a+(a_1-a_2+a), \ldots ,(a_1-a_i+a)+(a_1-a_{i+1}+a), \ldots, (a_1-a_n+a)+a\big)\\
& \quad -O\big(a+(a_1-a_2+a), \ldots ,(a_1-a_i+a)+(a_1-a_{i+1}+a), \ldots, (a_1-a_n+a)+a\big)\\
& =\mu_p(a, a_1-a_2 + a, \ldots ,  a_1-a_i + a, \ldots ,  a_1 -a_n+a ).
\end{align*}

(Op4) For $(a_1, \ldots , a_n) \in  U_p $ such that $n>3$, we have
\begin{align*}
&\mu_p(a_1, \ldots , a_n)\\
&=E(a_1+a_2, \ldots ,a_n+a_1)-O(a_1+a_2, \ldots ,a_n+a_1)\\
&=E\big(a_2+a_3, -(a_1+a_2)+2(a_2+a_3), a_3+a_4,\ldots ,a_n+a_1\big)\\
& \quad -O\big(a_2+a_3, -(a_1+a_2)+2(a_2+a_3), a_3+a_4,\ldots ,a_n+a_1\big)\\
&=E\big(a_1+(-a_1+a_2+a_3), (-a_1+a_2+a_3)+a_3, a_3+a_4,\ldots ,a_n+a_1\big)\\
&\quad -O\big(a_1+(-a_1+a_2+a_3), (-a_1+a_2+a_3)+a_3, a_3+a_4,\ldots ,a_n+a_1\big)\\
&=\mu_p (a_1, -a_1 + a_2 + a_3, a_3 , \ldots , a_n ).
\end{align*}

\end{proof}

\begin{theorem}
Suppose $p$ is an even integer.
Let $\tau \in \{1, \ldots ,p\}$ such that $\tau \equiv 0 \pmod{2}$, $\tau|p$ and $\displaystyle \frac{p}{\tau} \equiv 0 \pmod{2}$. Let ${\bm a}$ and ${\bm b}$ be two equivalent elements of $\displaystyle U_p$. Then it holds that $\mu_{p, \tau}(\bm{a})=\mu_{p, \tau}(\bm{b})$.
\end{theorem}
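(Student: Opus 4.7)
The plan is to reduce the statement to an invariance check for a single integer-valued quantity, and then verify the four moves (Op1)--(Op4) one at a time. First, if $\tau_p(\bm a) \neq \tau$, the already-established invariance of $\tau_p$ gives $\tau_p(\bm b) \neq \tau$, so $\mu_{p,\tau}(\bm a) = \mu_{p,\tau}(\bm b) = \infty$; hence I may assume $\tau_p(\bm a) = \tau_p(\bm b) = \tau$, and it suffices to check invariance under each single move.

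The key step is a reformulation that removes the explicit dependence on the base $(a_1, a_2)$. Write $s_i := a_i + a_{i+1}$ with cyclic indices. Since $\tau_p(\bm a) = \tau$, every $s_i - s_1$ is a multiple of $\tau$, so every $s_i$ lies in one of the two classes $[s_1]$ or $[s_1 + \tau]$ modulo $2\tau$. Unfolding the definitions, the consecutive sums of the reduced sequence $\bigl((a_{2j-1}-a_1)/\tau,\ (a_{2j}-a_2)/\tau\bigr)_j$ are exactly $(s_i - s_1)/\tau$ for $i = 1, \ldots, n$, and their parities are determined by which of the two classes contains $s_i$. Setting
\[
N_0(\bm a) := \#\{i : s_i \equiv s_1 \pmod{2\tau}\}, \qquad N_1(\bm a) := \#\{i : s_i \equiv s_1 + \tau \pmod{2\tau}\},
\]
one obtains $\mu_{p,\tau}(\bm a) = |N_0(\bm a) - N_1(\bm a)|$ (using that $p/\tau$ is even, so $\mu_{p/\tau}$ is defined on the reduced sequence). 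It therefore suffices to show that each (Op$k$) preserves the unordered pair $\{N_0, N_1\}$.

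The four checks are short and amount to computing how $(s_i)$ and the reference $s_1$ transform. Under (Op2), all $s_i$ and $s_1$ are fixed. Under (Op3), the formula $s'_i = 2(a_1 + a) - s_i$ yields $s'_i - s'_1 = -(s_i - s_1)$, which preserves the two classes and their counts. Under (Op1), $\{s_i\}$ undergoes a cyclic shift while the reference becomes $s_2$; the pair $(N_0, N_1)$ is preserved if $s_2 \equiv s_1 \pmod{2\tau}$ and swapped otherwise. Under (Op4) one computes $s'_1 = s_2$, $s'_2 = 2s_2 - s_1$, and $s'_i = s_i$ for $i \geq 3$; the congruence $2s_2 - s_1 \equiv s_1 \pmod{2\tau}$ holds in both possible classes of $s_2$, so the multiset of sums modulo $2\tau$ is unchanged, and the same dichotomy on $s_2$ as for (Op1) yields either preservation or a swap of $(N_0, N_1)$.

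The main subtlety, and the reason the absolute value appears in the definition of $\mu_{p,\tau}$, is precisely this potential sign flip: the signed difference $N_0 - N_1$ genuinely negates under (Op1) and (Op4) when the reference migrates between the two classes mod $2\tau$, so a naive attempt to propagate a signed invariance of $\mu_{p/\tau}$ from the reduced sequence would fail. Taking absolute values absorbs this ambiguity, and the unordered-pair viewpoint makes the verification uniform across all four moves.
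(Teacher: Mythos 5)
Your proof is correct, and it is organized around a key reformulation that the paper never states: writing $s_i=a_i+a_{i+1}$ (cyclically), you observe that the consecutive sums of the reduced sequence $\bigl((a_{2j-1}-a_1)/\tau,(a_{2j}-a_2)/\tau\bigr)_j$ are exactly $(s_i-s_1)/\tau$, so that $\mu_{p,\tau}(\bm{a})=|N_0(\bm{a})-N_1(\bm{a})|$ with $N_0,N_1$ counting the $s_i$ in the classes of $s_1$ and $s_1+\tau$ modulo $2\tau$ (well defined since $2\tau\mid p$). The paper instead computes directly with the reduced sequences move by move: for (Op2) it identifies the new reduced sequence with the old one, for (Op3) with its negation, and for (Op4) it shifts every even-position entry by $(a_3-a_1)/\tau$ and lets the absolute value absorb the possible global parity flip; the (Op1) case is declared easy, although there the base pair changes from $(a_1,a_2)$ to $(a_2,a_3)$ and the same flip phenomenon actually occurs. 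What your route buys is uniformity and transparency: all four moves become statements about the multiset $\{s_i\}$ modulo $2\tau$ together with a reference class, the possible swap of $(N_0,N_1)$ under (Op1) and (Op4) --- precisely the points where the paper's computation is tersest --- is isolated and justified, and the role of the absolute value in the definition of $\mu_{p,\tau}$ is explained rather than merely used. What the paper's route buys is that it never leaves the definition: the reduced sequence of the transformed tuple is exhibited explicitly, so no auxiliary identity has to be established first. Both arguments dispose of the $\infty$ case via the previously proved invariance of $\tau_p$, which you make explicit and the paper leaves implicit.
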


\begin{proof}
It suffices to show that $\mu_{p, \tau}$ is an invariant under the transformations  (Op1)-(Op4)   in Definition~\ref{def:Requiv}.

(Op1) It is easy to see that 
\[
\mu_{p, \tau}(a_2, \ldots , a_n, a_1)=\mu_{p, \tau} (a_1, \ldots , a_n)
\]
 for $(a_1, \ldots , a_n) \in \displaystyle U_p$.

(Op2) For $(a_1, \ldots , a_n) \in \displaystyle U_p$ and $a \in \mathbb{Z}_p$, we have
\begin{align*}
&\mu_{p, \tau}(a_1, \ldots , a_n)\\
&=~ {\Big |} ~
\mu_{\frac{p}{\tau}} \Big( \frac{a_1-a_1}{\tau} , \ldots , \frac{a_{2j-1}-a_1}{\tau},\frac{a_{2j}-a_2}{\tau}, \ldots , \frac{a_n-a_2}{\tau} \Big) 
~ {\Big |}\\
&=~ {\Big |} ~
\mu_{\frac{p}{\tau}} \Big( \ldots , \frac{(a_{2j-1}-(a_1-a))-a}{\tau},\frac{(a_{2j}+(a_1-a))-(a_2+(a_1-a))}{\tau}, \ldots  \Big)
~ {\Big |}\\
&=\mu_{p, \tau}\big(a, a_2+(a_1-a), \ldots , a_i +(-1)^i (a_1-a), \ldots ,  a_n +(a_1-a) \big).
\end{align*}

(Op3) For $(a_1, \ldots , a_n) \in \displaystyle U_p$ and $a \in \mathbb{Z}_p$, we have
\begin{align*}
&\mu_{p, \tau}(a_1, \ldots , a_n)\\
&=~ {\Big |} ~
\mu_{\frac{p}{\tau}}\Big( \frac{a_1-a_1}{\tau}, \ldots , \frac{a_{2j-1}-a_1}{\tau},\frac{a_{2j}-a_2}{\tau}, \ldots , \frac{a_n-a_2}{\tau}  \Big)
~ {\Big |}\\
&=~ {\Big |} ~
\mu_{\frac{p}{\tau}} \Big(  \frac{a_1-a_1}{\tau},  \ldots , \frac{a_1-a_{2j-1}}{\tau},\frac{a_2-a_{2j}}{\tau}, \ldots ,\frac{a_2-a_n}{\tau}    \Big)
~ {\Big |}\\
&=~ {\Big |} ~
\mu_{\frac{p}{\tau}} \Big( \ldots , \frac{(a_1-a_{2j-1}+a)-a}{\tau},\frac{(a_1-a_{2j}+a)-(a_1-a_2+a)}{\tau}, \ldots  \Big)
~ {\Big |}\\
&=\mu_{p, \tau}(a, a_1-a_2 + a, \ldots ,  a_1-a_i + a, \ldots ,  a_1 -a_n+a ).
\end{align*}

(Op4) For $(a_1, \ldots , a_n) \in U_p$ such that $n>3$, we have
\begin{align*}
&\mu_{p, \tau}(a_1, \ldots , a_n)\\
&=~ {\Big |} ~
\mu_{\frac{p}{\tau}}  \Big( \frac{a_1-a_1}{\tau}, \ldots , \frac{a_{2j-1}-a_1}{\tau},\frac{a_{2j}-a_2}{\tau}, \ldots , \frac{a_n-a_2}{\tau} \Big)
~ {\Big |}\\
&=~ {\Big |} ~E\Big(\ldots , \frac{a_{2j-1}-a_1}{\tau}+\frac{a_{2j}-a_2}{\tau}, \frac{a_{2j}-a_2}{\tau}+\frac{a_{2j+1}-a_1}{\tau}, \ldots\Big)\\
&\quad -O\Big(\ldots , \frac{a_{2j-1}-a_1}{\tau}+\frac{a_{2j}-a_2}{\tau}, \frac{a_{2j}-a_2}{\tau}+\frac{a_{2j+1}-a_1}{\tau}, \ldots\Big)~ {\Big |} ~\\
&=~ {\Big |} ~E\Big(\ldots , \frac{a_{2j-1}-a_1}{\tau}+(\frac{a_{2j}-a_2}{\tau}-\frac{a_{3}-a_1}{\tau}), \\
&\hspace{6cm}(\frac{a_{2j}-a_2}{\tau}-\frac{a_{3}-a_1}{\tau})+\frac{a_{2j+1}-a_1}{\tau}, \ldots\Big)\\
&\quad -O\Big(\ldots , \frac{a_{2j-1}-a_1}{\tau}+(\frac{a_{2j}-a_2}{\tau}-\frac{a_{3}-a_1}{\tau}), \\
&\hspace{6cm}(\frac{a_{2j}-a_2}{\tau}-\frac{a_{3}-a_1}{\tau})+\frac{a_{2j+1}-a_1}{\tau}, \ldots\Big)~ {\Big |} ~\\
&=~ {\Big |} ~
\mu_{\frac{p}{\tau}} \Big(\frac{a_1-a_1}{\tau},  \ldots , \frac{a_{2j-1}-a_1}{\tau}, \frac{a_{2j}-a_2}{\tau}-\frac{a_{3}-a_1}{\tau}, \ldots  , \frac{a_{n}-a_2}{\tau}-\frac{a_{3}-a_1}{\tau}\Big)
~ {\Big |}\\
&=~ {\Big |} ~
\mu_{\frac{p}{\tau}} \Big( \frac{a_1-a_1}{\tau} , \ldots , \frac{a_{2j-1}-a_1}{\tau}, \frac{a_{2j}-(-a_1+a_2+a_3)}{\tau}, \\
&\hspace{8cm} \ldots , \frac{a_{n}-(-a_1+a_2+a_3)}{\tau}\Big)
~ {\Big |}\\
&=\mu_{p, \tau}(a_1, -a_1 + a_2 + a_3, a_3 , \ldots , a_n ).
\end{align*}

\end{proof}

\section{Spatial graph diagrams}
A {\it spatial graph} is a graph  embedded in $\mathbb{R}^{3}$. 
We call a spatial graph each of whose vertices is  of even valence a {\it spatial Euler graph}. 
In this paper, a spatial graph means an unoriented spatial graph.
Two spatial graphs are {\it equivalent} if we can deform by an ambient isotopy of $\mathbb R^3$ one onto the other. 
A {\it diagram} of a spatial graph $G$ is an image of $G$ by a regular projection onto $\mathbb R^2$ with a height information at each crossing point. 
It is known that two spatial graph diagrams represent an equivalent spatial graph if and only if they are related by a finite sequence of the Reidemeister moves of type I-V depicted in Figure~\ref{Rmoves}. 
We call each connected component of complementary regions of a diagram a {\it region} of the diagram.
\begin{figure}[h]
  \begin{center}
    \includegraphics[clip,width=12.0cm]{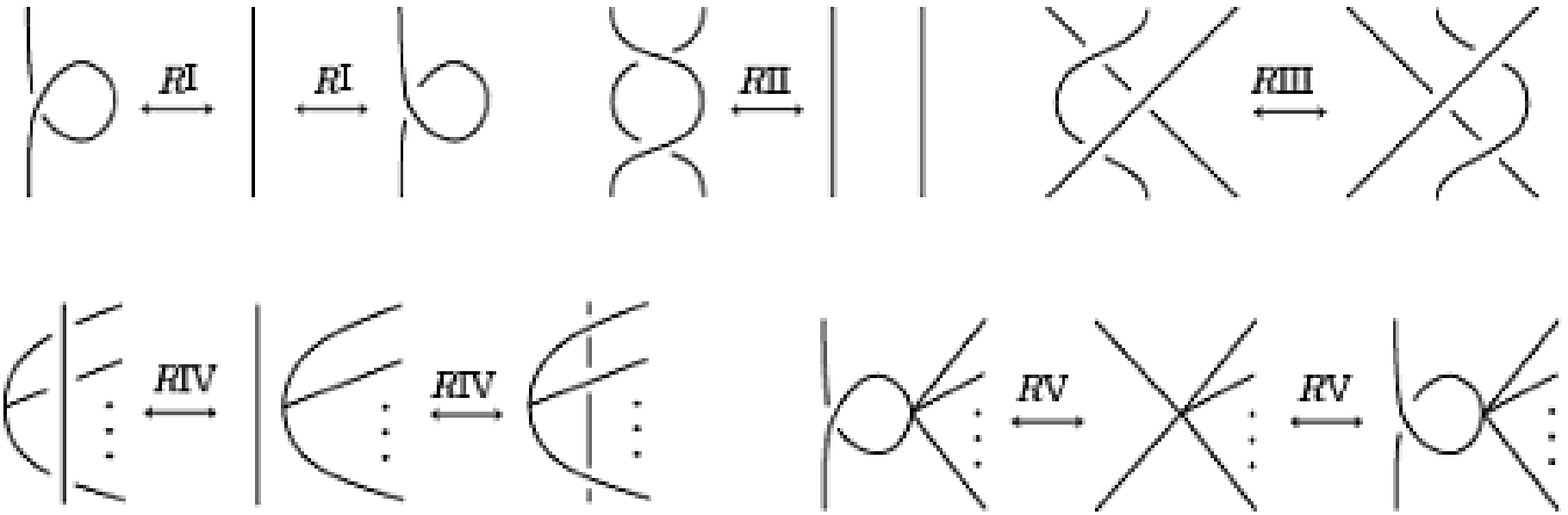}
    \caption{}
    \label{Rmoves}
  \end{center}
\end{figure}

\section{Dehn $p$-colorings of diagrams of spatial Euler graphs}\label{sec:Dehncoloring}
\begin{definition}
Let $D$ be a diagram of a spatial Euler graph and $\mathcal{R}(D)$ the set of regions of $D$.
A {\it Dehn $p$-coloring} of $D$ is a map $C: \mathcal{R}(D) \to \mathbb{Z}_p$ satisfying the following condition: 
\begin{itemize}
\item For a crossing $c$ with regions $r_1, r_2, r_3$ and $r_4$ such that $r_2$ is adjacent to an arbitrary chosen $r_1$ by an under-arc and $r_3$ is adjacent to $r_1$ by the over-arc as depicted in Figure~\ref{VWSIcoloring7},
\begin{align*}\label{crossingcondition}
C(r_1)- C(r_2)+ C(r_3) -C(r_4) = 0
\end{align*}
holds, which we call the {\it crossing condition}.
\end{itemize}
\begin{figure}[h]
  \begin{center}
    \includegraphics[clip,width=7.0cm]{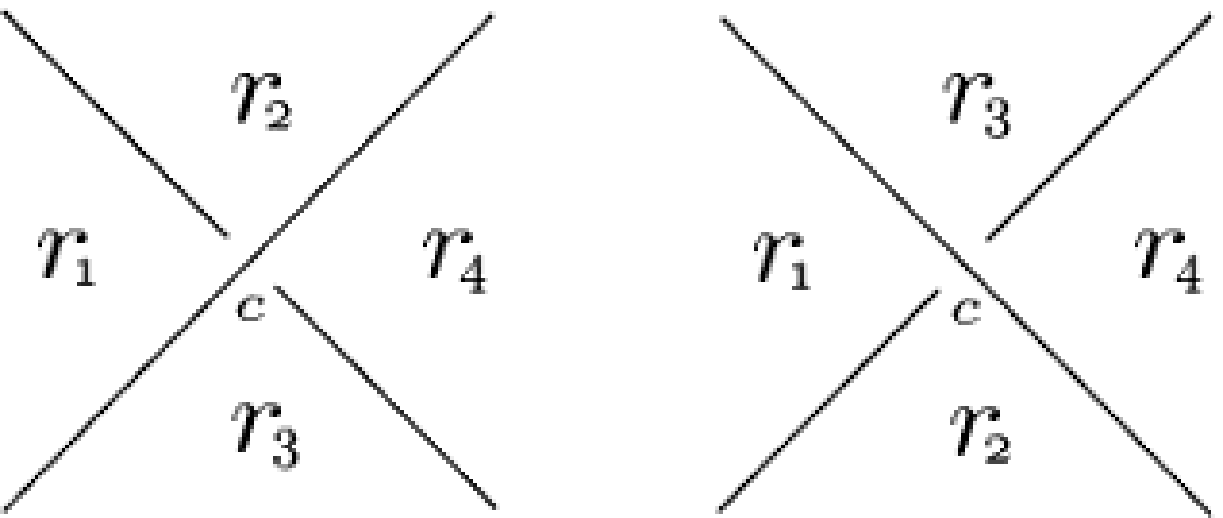}
    \caption{}
    \label{VWSIcoloring7}
  \end{center}
\end{figure}
We call $C(r)$ the {\it color} of a region $r$. 
We denote by ${\rm Col}_{p}(D)$ the set of Dehn $p$-colorings of $D$.
We denote by $(D,C)$ a diagram $D$ equipped with a Dehn $p$-coloring $C$, and we often represent $(D,C)$ by assigning the color $C(r)$ to each region $r$ of $D$.

\end{definition}
\begin{proposition}\label{prop:coloring}
Let $D$ and $D'$ be diagrams of spatial Euler graphs. If $D$ and $D'$ represent the same spatial graph, then there exists a bijection between ${\rm Col}_{p}(D)$ and ${\rm Col}_{p}(D')$.  
\end{proposition}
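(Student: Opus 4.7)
The strategy is to prove the proposition by showing invariance under each Reidemeister move. Since any two diagrams of the same spatial graph are related by a finite sequence of Reidemeister moves of types I--V (recalled in Section 2), it suffices to exhibit, for each single move, a bijection ${\rm Col}_p(D) \to {\rm Col}_p(D')$ between the Dehn $p$-colorings of the two diagrams before and after the move; composing these local bijections along the given sequence then proves the general statement.

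For each move, the two diagrams agree outside a small disk $B$. The natural candidate bijection preserves the colors of regions outside $B$ and determines the colors of the newly created regions inside $B$ by the crossing conditions at the new crossings inside $B$. What must be checked, for each move, is that this prescription produces a unique well-defined Dehn $p$-coloring of $D'$, that all crossing conditions of $D'$ are then satisfied, and that the construction is reversible (the inverse being given by applying the same recipe to the reverse move).

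For moves I, II, III, the verification is the classical Dehn-coloring calculation from link theory. At each new crossing the relation $C(r_1) - C(r_2) + C(r_3) - C(r_4) = 0$ uniquely solves for the color of one newborn region in terms of its three neighbors, and the few remaining consistency requirements collapse to trivial algebraic identities.

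The main obstacle lies with moves IV and V, the vertex moves, in which a strand interacts with a vertex of even valence $2n$. There a single move can create up to $2n$ new crossings at once, together with a corresponding number of new regions whose colors must be determined coherently. My plan is to pick one distinguished new region, fix its color by one crossing condition, and then propagate the colors around the vertex using the remaining crossing conditions, one at a time. The delicate step is verifying that upon returning to the starting region the computed color agrees with the initially assigned one, so that the propagation is consistent and defines a unique Dehn coloring of $D'$. I expect this closure to follow from an elementary telescoping: taken in cyclic order around the vertex, the alternating sum of the $2n$ crossing relations should reduce to the identity $0 = 0$, leaving no global obstruction. Once this closure is verified for move IV and the analogous (simpler) check is performed for move V, the map is manifestly invertible via the reverse move, and the bijection is complete.
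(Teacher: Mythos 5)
Your proposal takes essentially the same route as the paper: the paper's proof likewise defines $C'$ to agree with $C$ outside the $2$-disk where the move is applied and observes, by inspecting the local pictures (Figures~\ref{VWSIcoloring8} and \ref{VWSIcoloring9}), that the colors of the regions inside the disk are uniquely determined, the only substantive cases being the moves of type IV and V. The closure you anticipate for move IV does hold by exactly the alternating/telescoping computation you describe, and it is precisely there that the even valence enters (the relations around the vertex alternate in sign when the strand passes under, so the check closes up only because the number of crossings is even; compare Remark~\ref{rem:Dehncoloring}).
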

\begin{proof}
Let $D$ and $D'$ be diagrams such that $D'$ is obtained from $D$ by a single
Reidemeister move. Let $E$ be a $2$-disk in which the move is applied. Let $C$ be a Dehn
$p$-coloring of $D$. We define a Dehn $p$-coloring $C'$ of $D'$, corresponding to
$C$, by $C'(r) = C(r)$ for each region $r$ appearing in the outside of $E$. Then the colors
of the regions appearing in $E$, by $C'$, are uniquely determined, see Figures~\ref{VWSIcoloring8} and \ref{VWSIcoloring9} for Reidemeister moves of type IV and V. 
\end{proof}
\begin{figure}[h]
  \begin{center}
    \includegraphics[clip,width=9.0cm]{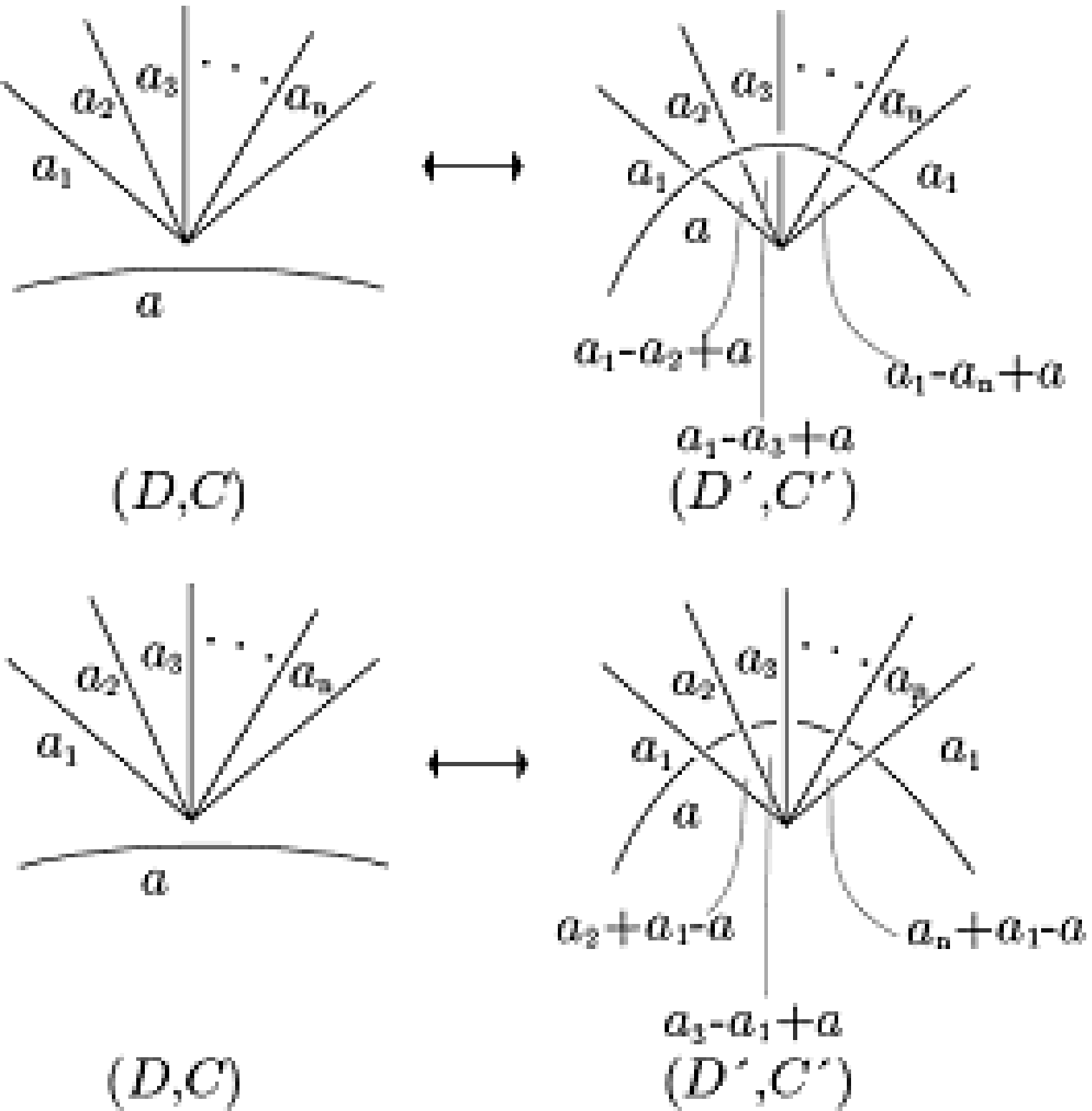}
    \caption{}
    \label{VWSIcoloring8}
  \end{center}
\end{figure}
\begin{figure}[h]
  \begin{center}
    \includegraphics[clip,width=9.0cm]{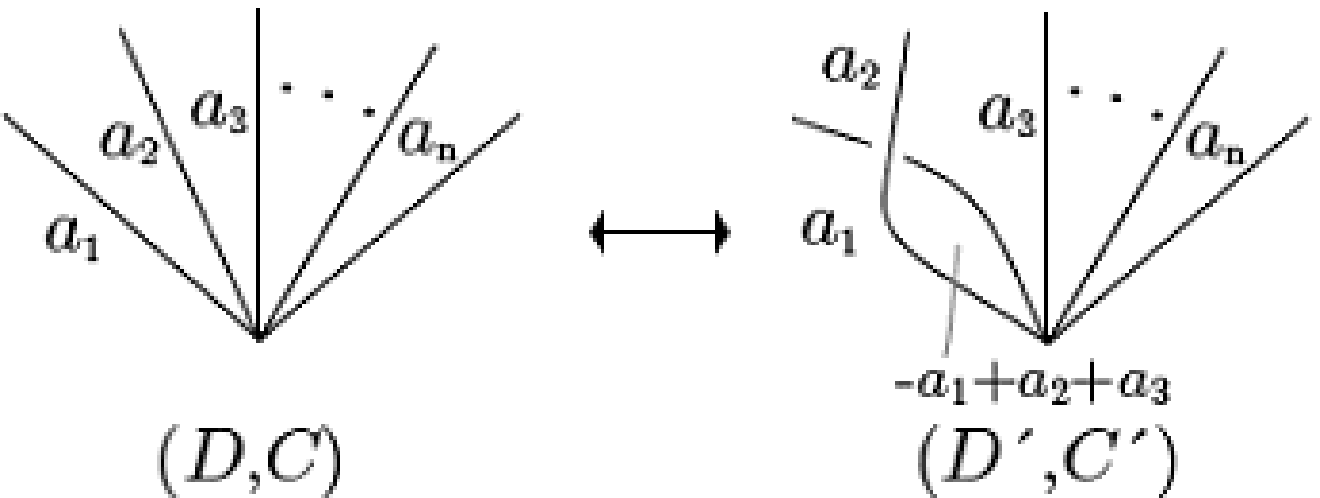}
    \caption{}
    \label{VWSIcoloring9}
  \end{center}
\end{figure}

\noindent Proposition~\ref{prop:coloring} implies that the number of Dehn $p$-colorings, i.e. $\# {\rm Col}_{p}(D)$, is an invariant of spatial Euler graphs.

\begin{remark}\label{rem:Dehncoloring}
For spatial graphs including an odd-valent vertex, even if we add any vertex condition, there does not exist a Dehn $p$-coloring such that $\# {\rm Col}_{p}(D)$ is an invariant of special graphs. This is because $\# {\rm Col}_{p}(D)$ might be changed under the Reidemeister move of type IV depicted in the lower picture of Figure~\ref{VWSIcoloring8}.
\end{remark}

\section{Vertex-weight invariants of spatial Euler graphs} \label{sec:VWSI}
Let $f : U_p \to S$ be an invariant under the transformations  (Op1)-(Op4) of Definition~\ref{def:Requiv}. 
For example, we can define $f$ under some proper situation by $f=\tau_p$, $f=\varepsilon_p$, $f=\mu_p$, $f=\mu_{p,\tau}$, $f=\tau_p \times \varepsilon_p, \ldots ,$ or $f=\tau_p \times  \varepsilon_p \times  \mu_p \times  \mu_{p,\tau}$.

Let $D$ be a diagram of a spatial Euler graph $G$ and $C\in {\rm Col}_{p}(D)$. 
For a vertex $v$ of $D$ with regions $r_1, r_2, \ldots, r_n $  in clockwise direction as shown in Figure~\ref{VWSIcoloring6}, we take a weight $W_{f}(D, C; v)$ as $W_{f}(D, C; v)=\Big(n, f\big(C(r_1), C(r_2),  \ldots, C(r_n)\big)\Big),$
where in this paper, we represent it by 
$$W_{f}(D, C; v)=\Big({\rm valency}=n, f=f\big(C(r_1), C(r_2),  \ldots, C(r_n)\big)\Big)$$
as an easy-to-understand way.
\begin{figure}[t]
  \begin{center}
    \includegraphics[clip,width=4.0cm]{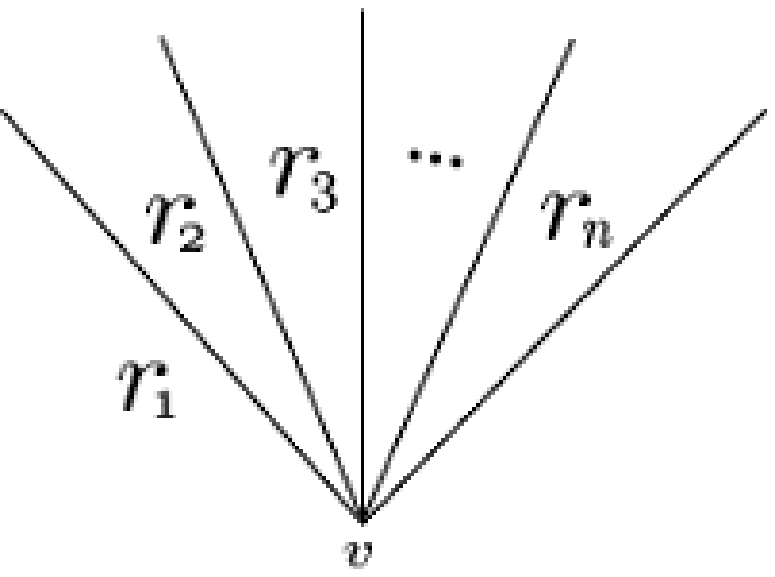}
    \caption{}
    \label{VWSIcoloring6}
  \end{center}
\end{figure}
We denote by $W_{f}(D,C)$ the multi-set of the weights of all vertices of $D$.
As a multi-set, set 
\[
\Phi_{f}(D)= \Big\{W_f(D,C) ~\Big|~ C\in {\rm Col}_{p}(D) \Big\},
\]
which we call the {\it vertex-weight invariant} of $D$ (or $G$) with respect to $f$. Then we have the following theorem:
\begin{theorem}\label{thm:VWSI}
Let $D$ and $D'$ be diagrams of spatial Euler graphs.
If $D$ and $D'$ represent the same spatial graph, then we have $\Phi_{f}(D)= \Phi_{f}(D')$.
That is, $\Phi_{f}(D)$ is an invariant for spatial Euler graphs.
\end{theorem}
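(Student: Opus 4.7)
The plan is to reduce the theorem to a move-by-move verification along the Reidemeister moves I--V of Figure~\ref{Rmoves}. Since equivalent spatial graphs are connected by a finite sequence of such moves, it suffices to prove $\Phi_f(D) = \Phi_f(D')$ when $D'$ is obtained from $D$ by a single Reidemeister move. Moreover, Proposition~\ref{prop:coloring} already supplies a canonical bijection $\Psi : {\rm Col}_p(D) \to {\rm Col}_p(D')$, built by keeping the colors outside the move disk $E$ unchanged, so I would show that the multi-sets $W_f(D, C)$ and $W_f(D', \Psi(C))$ are equal for each $C \in {\rm Col}_p(D)$. Summing over $C$ then yields the multi-set equality $\Phi_f(D) = \Phi_f(D')$.

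For moves of types I, II, III, the disk $E$ contains no vertex of the spatial graph, so for every vertex $v$ of $D$ the cyclic list of regions around $v$ and their colors is literally unchanged in $(D', \Psi(C))$. Moreover, the valence of every vertex is preserved by all five moves, so the pair $W_f(D, C; v)$ equals $W_f(D', \Psi(C); v)$ for each $v$, and consequently $W_f(D, C) = W_f(D', \Psi(C))$ as multi-sets.

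The essential content lies in moves IV and V, both of which take place near a single vertex $v$. Let $v$ have valence $n$ with surrounding regions $r_1, \ldots, r_n$ in clockwise order, of colors $(a_1, \ldots, a_n)$ under $C$. For move IV, a strand of some color $a \in \mathbb{Z}_p$ slides across $v$; the crossing condition at each newly created crossing determines the colors of the regions on the freshly exposed side of $v$ as explicit $\mathbb{Z}_p$-linear combinations of $a$ and the $a_i$. Inspection of the four sub-cases (strand over or under $v$, passing in either direction) shows that, after possibly relabelling the starting region via (Op1), the new sequence of region colors around $v$ in $(D', \Psi(C))$ is the image of $(a_1, \ldots, a_n)$ under one of the transformations (Op2) or (Op3) with the constant $a$. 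Invariance of $f$ under (Op1)--(Op3) then gives $W_f(D, C; v) = W_f(D', \Psi(C); v)$, and since no other vertex is affected, $W_f(D, C) = W_f(D', \Psi(C))$ as multi-sets. For move V, the local effect is to replace three consecutive region colors $a_1, a_2, a_3$ by $a_1, -a_1 + a_2 + a_3, a_3$, exactly the substitution of (Op4); invariance of $f$ under (Op4) concludes this case.

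The main obstacle is the case analysis for move IV: several sub-variants arise depending on the over/under type and direction of the sliding strand as well as on the choice of which region is taken as $r_1$ in the cyclic listing. In each sub-case one must solve the crossing-condition equations along the strand-edge crossings to identify the new region colors explicitly, and then match the resulting tuple to the correct transformation among (Op1)--(Op3) with the correct constant $a \in \mathbb{Z}_p$. Once this explicit identification is carried out, the proof reduces to the invariance results established in Section~1.
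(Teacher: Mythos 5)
Your proposal is correct and follows essentially the same route as the paper: reduce to single Reidemeister moves, use the coloring correspondence of Proposition~\ref{prop:coloring}, and identify the local change of region colors around a vertex with the transformations (Op1)--(Op4), whose $f$-invariance was established in Section~1 (type V giving (Op4), type IV giving (Op2) or (Op3) according to whether the strand passes under or over, types I--III leaving all vertex weights untouched). The paper only writes out the type V case and dismisses the rest with ``the same argument applies,'' so your explicit treatment of move IV is simply a more detailed rendering of the same argument; only the phrase ``a strand of some color $a$'' should be read as the color of the region beyond the sliding strand, since arcs carry no colors in a Dehn coloring.
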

\begin{proof}
First, we note that $W_f(D, C; v)$ does not depend on the starting region $r_1$ to read the regions $r_1, \ldots , r_n$ in clockwise direction around $v$ since $f$ is unchanged under the transformation (Op1).

Now we show that $W_f(D, C; v)$ is unchanged under the Reidemeister moves of spatial graph diagrams. 
Let $D$ and $D'$ be diagrams such that $D'$ is obtained from $D$ by a single
Reidemeister move shown in Figure~\ref{VWSIcoloring9}. Let $C$ be a Dehn $p$-coloring of $D$, and $C'$ the corresponding Dehn $p$-coloring of $D'$.
For an $n$-valent vertex $v$ of $(D,C)$ with $W_f(D, C; v)=\big({\rm valency}=n, f=f(a_1, a_2, \ldots , a_n)\big)$ as shown in the left of Figure~\ref{VWSIcoloring9}, the corresponding vertex, say $v'$, of $(D',C')$ has $W_f(D', C'; v')=\big({\rm valency}=n, f=f(a_1, -a_1+a_2+a_3, a_3,  \ldots , a_n)\big)$. 
Since $f$ is unchanged under the transformation (Op4) of Definition~\ref{def:Requiv}, we have $W_{f}(D', C'; v')=W_{f}(D, C; v)$. The same argument applies to the cases of the other Reidemeister moves.
This implies that $W_{f}(D,C)= W_{f}(D',C')$, which leads to the property that $\Phi_{f}(D)=\Phi_{f}(D')$.
\end{proof}

\begin{remark}
We may arrange the definition of $W_{f}(D,C)$ for examples as follows: Define $W_{f}(D, C; v)$ by $W_{f}(D, C; v)=f\big(C(r_1), C(r_2),  \ldots, C(r_n)\big)$, and define $W_{f}(D,C)$ by the multiset of the weights of all vertices of $D$.  In another way, 
define $W_{f}(D, C; v)$ by $W_{f}(D, C; v)=f\big(C(r_1), C(r_2),  \ldots, C(r_n)\big)$, and define $W_{f}(D,C)$ by the sum of the weights of all vertices of $D$ when the target set $S$ of $f$ has an Abelian group structure. 
\end{remark}

\begin{example}
Let $G$ and $G'$ be the spatial graphs depicted in Figure~\ref{fig:GG'}.
The spatial graphs $G$ and $G'$ can be distinguished with neither their constituent links nor $\# {\rm Col}_{p}$, and indeed, $\# {\rm Col}_{p}(G)=\# {\rm Col}_{p}(G')=p^5$ holds for any $p\in \mathbb Z_{\geq 2}$. 
On the other hand, we can distinguish them with $\Phi_{\tau_{3}}$, which is 
shown as follows:
We show that the multiset $\big\{({\rm valency}=4,\tau_{3}=1),({\rm valency}=6,\tau_{3}=3)\big\} $ is not included in $\Phi_{\tau_3}(G)$, while it is included in $\Phi_{\tau_3}(G')$.
For the diagram $D$,  depicted in the left of Figure~\ref{VWSIgraphs1},  of $G$ and a Dehn $3$-coloring $C\in {\rm Col}_{3}(D)$, assume that $W_{\tau_3}(D,C; v_2)=3$ for the 6-valent vertex $v_2$. Then the regions $r_1$-$r_6$ around $v_2$ must be colored alternately as $C(r_1)=a, C(r_2)=b, C(r_3)=a, C(r_4)=b, C(r_5)=a, C(r_6)=b$ for some $a,b\in \mathbb Z_3$. Put $C(r_7)=c$ for some $c\in \mathbb Z_3$. We then have $C(r_8)=-a+b+c$, $C(r_9)=2a-c$, $C(r_{10})=a+b-c$, $C(r_{11})=2a-c$ and $C(r_{12})=c$ from the crossing conditions at $c_1$, $c_2$, $c_3$, $c_4$ and $c_5$, respectively, in this order, see the right of Figure~\ref{VWSIgraphs1}. 
Then the crossing condition at $c_6$ shows that 
$$c=C(r_{12})= C(r_9)-C(r_8)+C(r_{10})= (2a-c)-(-a+b+c)+(a+b-c) = a,$$
which implies that the regions $r_1$, $r_6$, $r_{12}$, $r_2$ around $v_1$ must be colored alternately, that is, we have $$W_{\tau_3}(D,C; v_1)=\tau_3\big(C(r_1), C(r_6), C(r_{12}), C(r_2)\big)=\tau_3(a,b,a,b)=3.$$  
We note that the crossing condition at $c_7$ is satisfied when $a=c$.
Therefore the multiset $\big\{({\rm valency}=4,\tau_{3}=1),({\rm valency}=6,\tau_{3}=3)\big\} $ is not included in $\Phi_{\tau_3}(G)$.
On the other hand, the multiset $\big\{({\rm valency}=4,\tau_{3}=1),({\rm valency}=6,\tau_{3}=3)\big\} $ is included in $\Phi_{\tau_3}(G')$ since the Dehn $3$-colored diagram of $G'$ in Figure~\ref{VWSIgraphs2} gives this multiset.

Indeed, we have  
\[
\Phi_{\tau_3}(G)=\left.
\begin{cases}
\Big\{({\rm valency}=4,\tau_{3}=1),({\rm valency}=6,\tau_{3}=1)\Big\}(216 {\rm times}), \\
\Big\{({\rm valency}=4,\tau_{3}=3),({\rm valency}=6,\tau_{3}=1)\Big\}(18 {\rm times}), \\
\Big\{({\rm valency}=4,\tau_{3}=3),({\rm valency}=6,\tau_{3}=3)\Big\}(9 {\rm times})
\end{cases}
\right\}, 
\]
and
\[
\Phi_{\tau_3}(G')=\left.
\begin{cases}
\Big\{({\rm valency}=4,\tau_{3}=1),({\rm valency}=6,\tau_{3}=1)\Big\}(144 {\rm times}), \\
\Big\{({\rm valency}=4,\tau_{3}=1),({\rm valency}=6,\tau_{3}=3)\Big\}(18 {\rm times}), \\
\Big\{({\rm valency}=4,\tau_{3}=3),({\rm valency}=6,\tau_{3}=1)\Big\}(72 {\rm times}), \\
\Big\{({\rm valency}=4,\tau_{3}=3),({\rm valency}=6,\tau_{3}=3)\Big\}(9 {\rm times})
\end{cases}
\right\}.
\]
Thus $G$ and $G'$ can be distinguished by $\Phi_{\tau_{3}}$.
\end{example}
\begin{figure}[ht]
  \begin{center}
    \includegraphics[width=9cm]{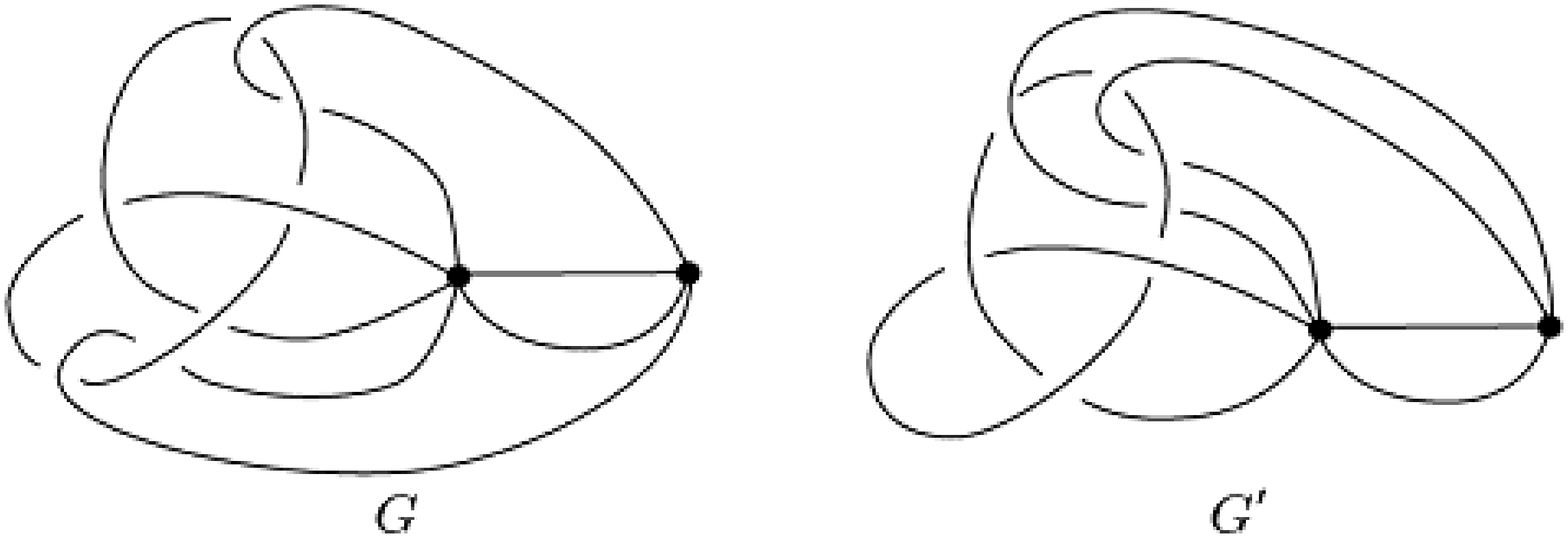}
    \caption{}
    \label{fig:GG'}
  \end{center}
\end{figure}
\begin{figure}[ht]
  \begin{center}
    \includegraphics[width=9cm]{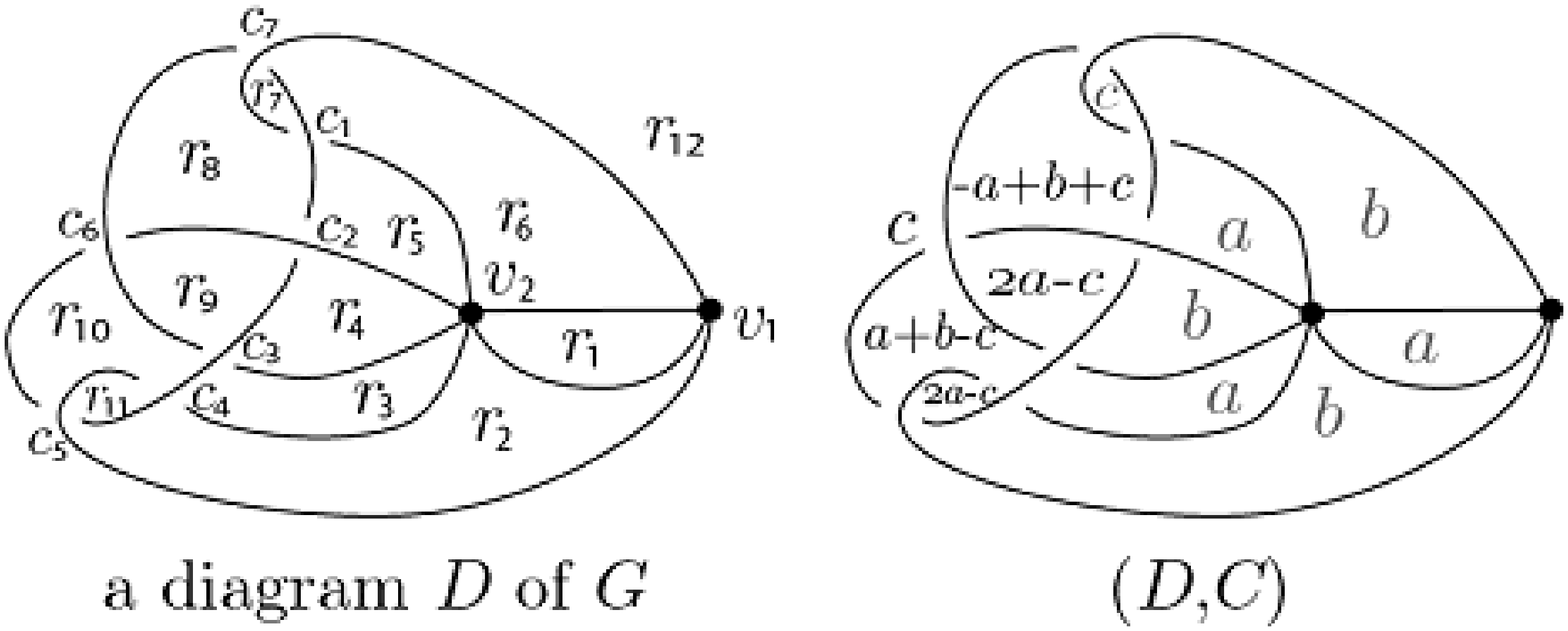}
    \caption{}
    \label{VWSIgraphs1}
  \end{center}
\end{figure}
\begin{figure}[ht]
  \begin{center}
    \includegraphics[width=4.3cm]{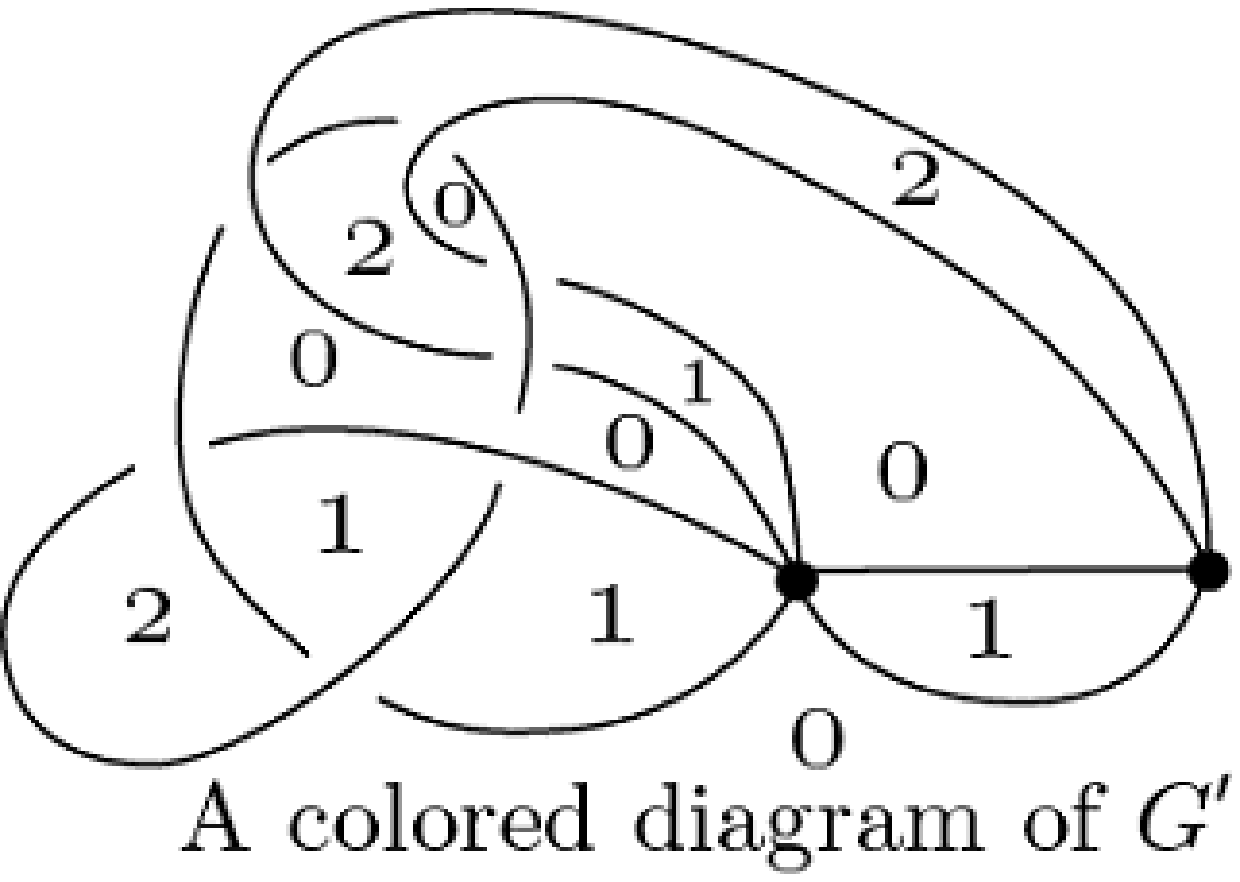}
    \caption{}
    \label{VWSIgraphs2}
  \end{center}
\end{figure}

\begin{remark}\label{rem:VWSI}
As mentioned in Remark \ref{rem:Dehncoloring}, for spatial graphs including an odd-valent vertex, even if we add any vertex condition, there does not exist a vertex-weight invariant such that $\Phi_{f}(D)$ is an invariant of special graphs. This is because $\Phi_{f}(D)$ might be changed  under the Reidemeister move of type IV depicted in the lower picture of Figure~\ref{VWSIcoloring8}.
\end{remark}

\section{An application for spatial graphs with odd-valent vertices}
In Section~\ref{sec:Dehncoloring}, we introduced the coloring invariants $\#{\rm Col}_{p}(G)$ of spatial Euler graphs, and in Section~\ref{sec:VWSI}, we introduced the invariants $\Phi_{f}(G)$ of spatial Euler graphs. 
As mentioned in Remarks~\ref{rem:Dehncoloring} and \ref{rem:VWSI}, 
for spatial graphs including odd-valent vertices, these invariants are meaningless.
However, for spatial graphs with odd-valent vertices, we  can apply our invariants to the spatial Euler graphs obtained by  taking 
 the parallel of some edges, and thus, our invariants can be also useful for spatial graphs with odd-valent vertices, where this method was introduced in \cite{IshiiYasuhara97}. 
In this section, we show how to apply our invariants to spatial graphs with odd-valent vertices in detail,  and give some calculation example.

Let $G$ be a spatial graph and $E_G=\{e_1, \ldots e_s\}$ the set of  edges of $G$.  
We replace $r$ edges $e_{i_1}, \ldots, e_{i_r} \in E_G$  without duplicates with the doubles of the edges, respectively, where the double of an edge $e$ is obtained by replacing the edge $e$ with the $2$-parallel $e_+ \cup e_-$ of $e$ such that  $e_+$ and $e_-$ are connected at the end points of $e$ as in Figure~\ref{VWSIdoubleedge}. 
Here one might think that there is an ambiguity for twists of the parallel edges $e_+$ and $e_-$, and however, the ambiguity can be solved by  equivalence transformations of spatial graphs corresponding to the Reidemeister moves of type V.   
\begin{figure}
  \begin{center}
    \includegraphics[width=9cm]{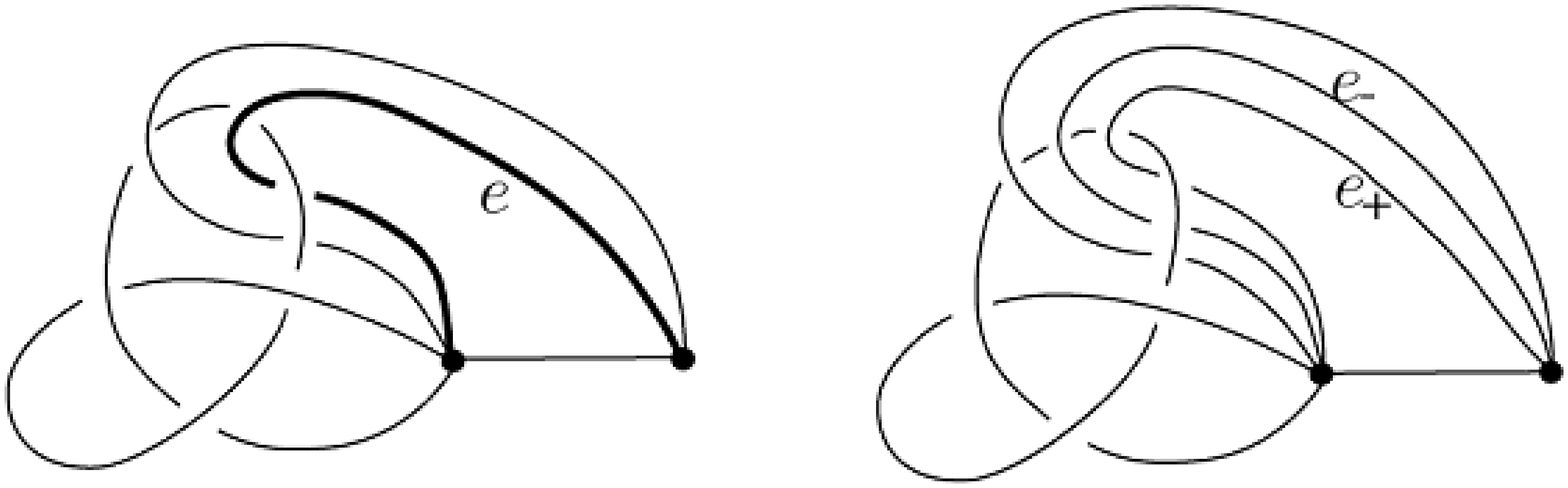}
    \caption{}
    \label{VWSIdoubleedge}
  \end{center}
\end{figure}
We call the resultant spatial graph the {\it double of $G$ with respect to $\{e_{i_1}, \ldots, e_{i_r}\}$}, and we denote it by $d(G; \{e_{i_1}, \ldots, e_{i_r}\})$. 
We say that $\{e_{i_1}, \ldots, e_{i_r}\}$ is {\it doublable} if $d(G; \{e_{i_1}, \ldots, e_{i_r}\})$ is a spatial Euler graph. 
For $r \in \mathbb Z_{+}$, we set
\[
d\hspace{-0.3mm}{\rm Edges}(G;r)=\left\{
\{e_{i_1}, \ldots, e_{i_r}\} \subset E_G ~\Big|~ 
\begin{array}{l}
\{e_{i_1}, \ldots, e_{i_r}\} \mbox{ is doublable, and}\\
\#\{e_{i_1}, \ldots, e_{i_r}\}=r
\end{array}
\right\}.
\]
Let $\Phi$ be an invariant of spatial Euler graphs.
For a spatial graph $G$ that might have an odd-valent vertex and $r\in \mathbb Z_+$, set 
 \[
\Phi(G;r)= \big\{\Phi\big(d(G; \{e_{i_1}, \ldots, e_{i_r}\})\big)  ~\big|~\{e_{i_1}, \ldots, e_{i_r}\} \in d\hspace{-0.3mm}{\rm Edges}(G;r)\big\}
 \]
as a multiset.
We have the following proposition:
\begin{proposition}
$\Phi(G;r)$ is an invariant of spatial graphs.
\end{proposition}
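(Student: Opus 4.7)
The plan is to reduce the proposition to two assertions: (a) the indexing set $d\hspace{-0.3mm}{\rm Edges}(G;r)$ depends only on the abstract graph underlying $G$, so it is canonically identified under equivalence $G \sim G'$; and (b) for each fixed doublable $S$, the ambient isotopy class of $d(G;S)$ depends only on the equivalence class of $G$. Since $\Phi$ is a spatial Euler graph invariant by hypothesis, (a) and (b) together imply that the multiset $\Phi(G;r)$ is an invariant of $G$. For (a), equivalent spatial graphs share the same underlying abstract graph, and doublability of $\{e_{i_1},\ldots,e_{i_r}\}$ is a purely combinatorial condition: at each vertex $v$, the original valence plus the number of $S$-edges incident to $v$ (loops counted twice) must be even. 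Hence the canonical edge-bijection induced by an equivalence carries $d\hspace{-0.3mm}{\rm Edges}(G;r)$ bijectively onto $d\hspace{-0.3mm}{\rm Edges}(G';r)$.

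For (b), I would argue by induction on the number of Reidemeister moves relating two diagrams $D, D'$ of $G$. For a single move $D \to D'$, it suffices to exhibit a finite sequence of Reidemeister moves from $d(D;S)$ to $d(D';S)$. Moves of types I, II, III take place in a disk disjoint from the vertices; if the disk meets only edges outside $S$, the move applies verbatim, while if it meets edges in $S$, the move lifts by performing it on each of the two parallel strands, with auxiliary Reidemeister moves of types II and III used to slide the companion strand past the new crossings. Moves of types IV and V take place near a vertex; the $2$-parallel strands of any incident $S$-edge travel together through the vertex region, so the move lifts in tandem, while the twisting ambiguity of $e_+$ and $e_-$ noted in the definition of $d(G;S)$ is precisely resolved by Reidemeister V moves on the doubled diagram, and therefore does not affect the ambient isotopy class.

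The principal obstacle is the case analysis for the lifts of Reidemeister IV and V in the doubled setting, where a vertex has a mixture of doubled and non-doubled incident edges and the parallel strands must be carried through coherently; this mirrors the treatment of Ishii--Yasuhara for Fox colorings and is routine but tedious. Once (a) and (b) are in hand, for any $S \in d\hspace{-0.3mm}{\rm Edges}(G;r)$ with corresponding $S' \in d\hspace{-0.3mm}{\rm Edges}(G';r)$ one has $d(G;S) \sim d(G';S')$, hence $\Phi(d(G;S)) = \Phi(d(G';S'))$. Taking multisets over the matched index sets then yields $\Phi(G;r) = \Phi(G';r)$, completing the argument.
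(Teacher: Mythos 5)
Your argument is correct and follows essentially the same route as the paper: reduce to the fact that an equivalence $G\sim G'$ induces a matching of doublable edge sets under which the corresponding doubles are equivalent spatial Euler graphs, and then apply the hypothesis that $\Phi$ is an invariant of spatial Euler graphs to conclude equality of the multisets. The only difference is that the paper simply cites Ishii--Yasuhara for the equivalence of the doubled graphs, whereas you sketch the Reidemeister-move lifting (including the twist ambiguity resolved by type V moves) yourself.
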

\begin{proof}
Put
\[
d(G;r)=\big\{d(G; \{e_{i_1}, \ldots, e_{i_r}\})  ~\big|~\{e_{i_1}, \ldots, e_{i_r}\} \in d\hspace{-0.3mm}{\rm Edges}(G;r)\big\}.
\]
We then have that if two spatial graphs $G_1$ and $G_2$  are equivalent, then there exists a bijection $\psi: d(G_1;r) \to d(G_2;r)$ such that for $G_1'\in d(G_1;r)$ and $G_2' =\psi(G_1')$, $G_1'$ and $G_2'$ are equivalent, see \cite{IshiiYasuhara97}. 
Besides, by Theorem~\ref{thm:VWSI}, we have $\Phi(G_1')= \Phi(G_2')$.
This completes the proof.
\end{proof}
\begin{example}
Let $G_1$ and $G_2$ be the spatial graphs depicted in Figure~\ref{G1G2}.
We note that they have odd-valent vertices. Let us consider the case that $\Phi=\Phi_{\tau_3}$ and $r=1$ for $\Phi(G_1;r)$ and $\Phi(G_2;r)$.
Since $d{\rm Edges}(G_1;1)=\big\{\{e_{11}\}, \{e_{12}\}, \{e_{13}\} \big\}$ and $d{\rm Edges}(G_2;1)=\big\{\{e_{21}\}, \{e_{22}\}, \{e_{23}\} \big\}$ as depicted in Figure~\ref{G1G2}, 
\begin{align*}
&\Phi_{\tau_3}(G_1;1) = \big\{\Phi_{\tau_3}\big(d(G_1; \{e_{11}\})\big), \Phi_{\tau_3}\big(d(G_1; \{e_{12}\})\big), \Phi_{\tau_3}\big(d(G_1; \{e_{13}\})\big)  \big\}, \mbox{ and }\\
&\Phi_{\tau_3}(G_2;1) = \big\{\Phi_{\tau_3}\big(d(G_2; \{e_{21}\})\big), \Phi_{\tau_3}\big(d(G_2; \{e_{22}\})\big), \Phi_{\tau_3}\big(d(G_2; \{e_{23}\})\big)  \big\},
\end{align*}
see Figures~\ref{G1bcd} and \ref{G2bcd}.
Since we have 
\[
\big\{\Phi_{\tau_3}\big(d(G_1; \{e_{11}\})\big), \Phi_{\tau_3}\big(d(G_1; \{e_{12}\})\big), \Phi_{\tau_3}\big(d(G_1; \{e_{13}\})\big)  \big\}=\big\{\mathbb{A}, \mathbb{A}, \mathbb{A}\big\}
\]
and
\[
\big\{\Phi_{\tau_3}\big(d(G_2; \{e_{21}\})\big), \Phi_{\tau_3}\big(d(G_2; \{e_{22}\})\big), \Phi_{\tau_3}\big(d(G_2; \{e_{23}\})\big)  \big\}=\big\{\mathbb{A}, \mathbb{A}, \mathbb{B}\big\}
, 
\]
it holds that $\Phi_{\tau_3}(G_1;1)\not = \Phi_{\tau_3}(G_2;1)$, 
where
\[\mathbb{A}=\left.
\begin{cases}
\Big\{({\rm valency}=4,\tau_3=1),({\rm valency}=6,\tau_3=1)\Big\}(216 {\rm times}), \\
\Big\{({\rm valency}=4,\tau_3=3),({\rm valency}=6,\tau_3=1)\Big\}(18 {\rm times}), \\
\Big\{({\rm valency}=4,\tau_3=3),({\rm valency}=6,\tau_3=3)\Big\}(9 {\rm times})
\end{cases}
\right\}\]
and
\[\mathbb{B}=\left.
\begin{cases}
\Big\{({\rm valency}=4,\tau_3=1),({\rm valency}=6,\tau_3=1)\Big\}(144 {\rm times}), \\
\Big\{({\rm valency}=4,\tau_3=1),({\rm valency}=6,\tau_3=3)\Big\}(18 {\rm times}), \\
\Big\{({\rm valency}=4,\tau_3=3),({\rm valency}=6,\tau_3=1)\Big\}(72 {\rm times}), \\
\Big\{({\rm valency}=4,\tau_3=3),({\rm valency}=6,\tau_3=3)\Big\}(9 {\rm times})
\end{cases}
\right\}. \]
Thus $G_1$ and $G_2$ can be distinguished by $\Phi_{\tau_3}(\,\bullet\,; 1)$.
We note that $G_1$ and $G_2$ can be distinguished by neither their constituent links nor the coloring numbers $\#{\rm Col}_p(\,\bullet\,;1)$ for any $p\in \mathbb Z_{\geq 2}$, where $\#{\rm Col}_p(G_1; 1) = \#{\rm Col}_p(G_2; 1)=\{p^5, p^5, p^5 \}$.

\end{example}
\begin{figure}
  \begin{center}
    \includegraphics[width=9cm]{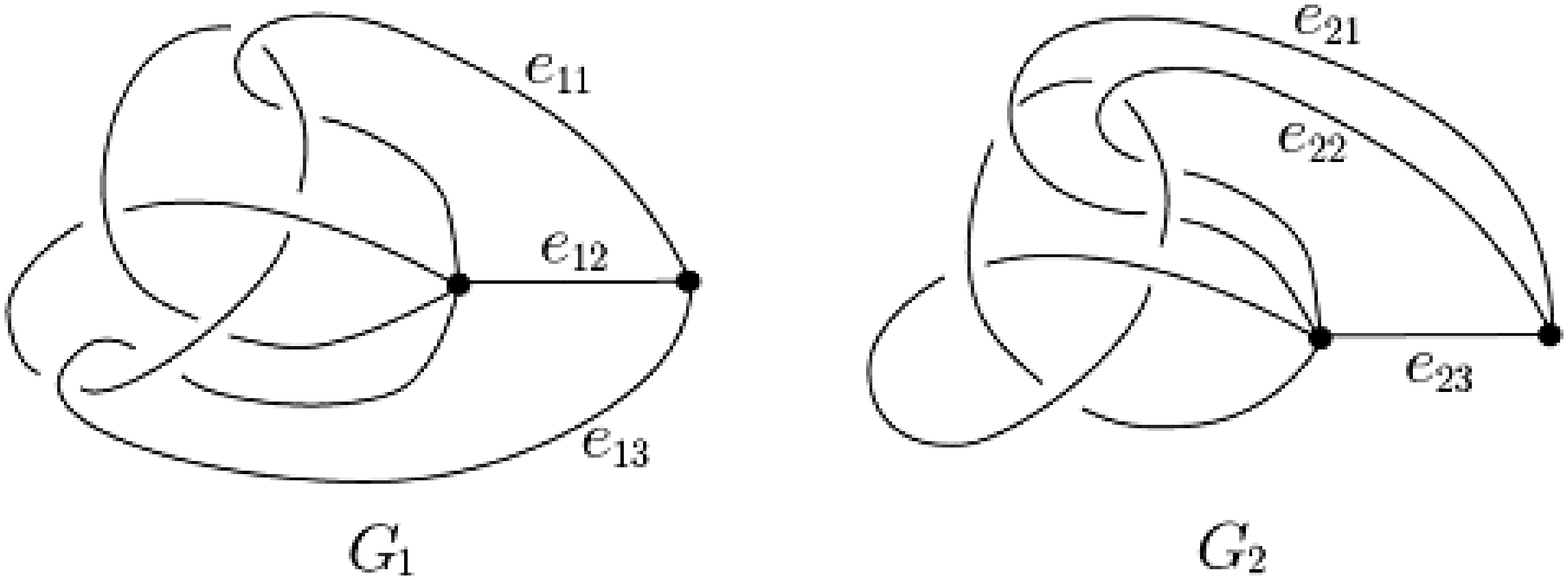}
    \caption{}
    \label{G1G2}
  \end{center}
\end{figure}
\begin{figure}
  \begin{center}
    \includegraphics[width=11cm]{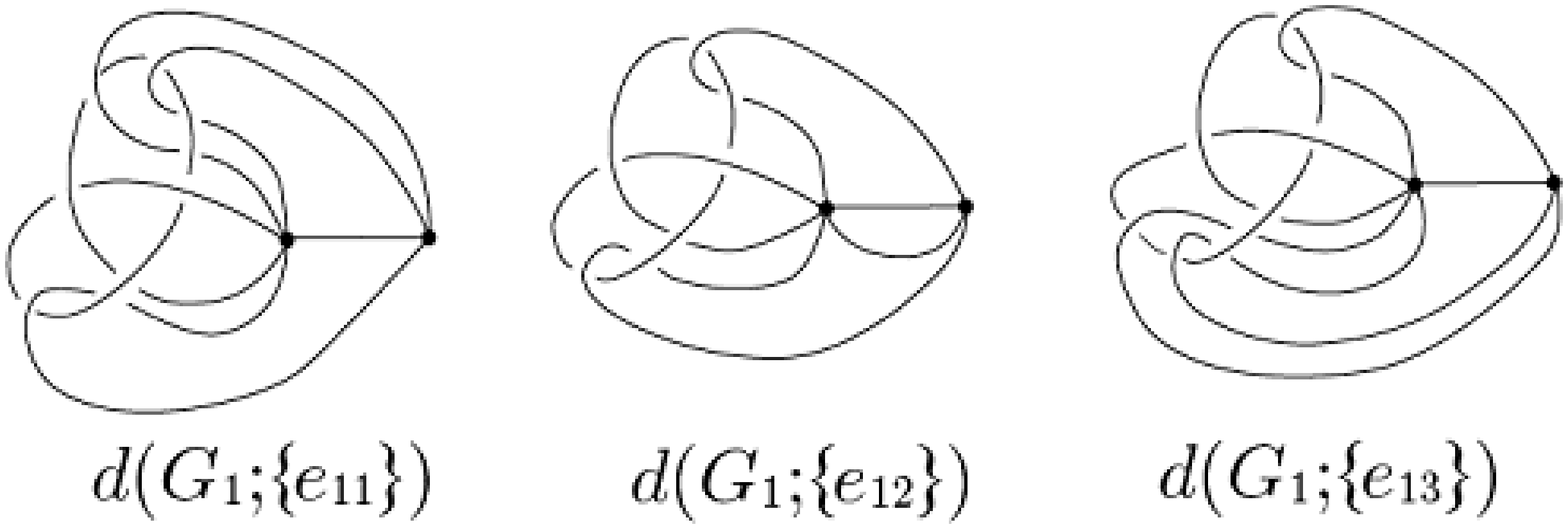}
    \caption{}
    \label{G1bcd}
  \end{center}
\end{figure}
\begin{figure}
  \begin{center}
    \includegraphics[width=11cm]{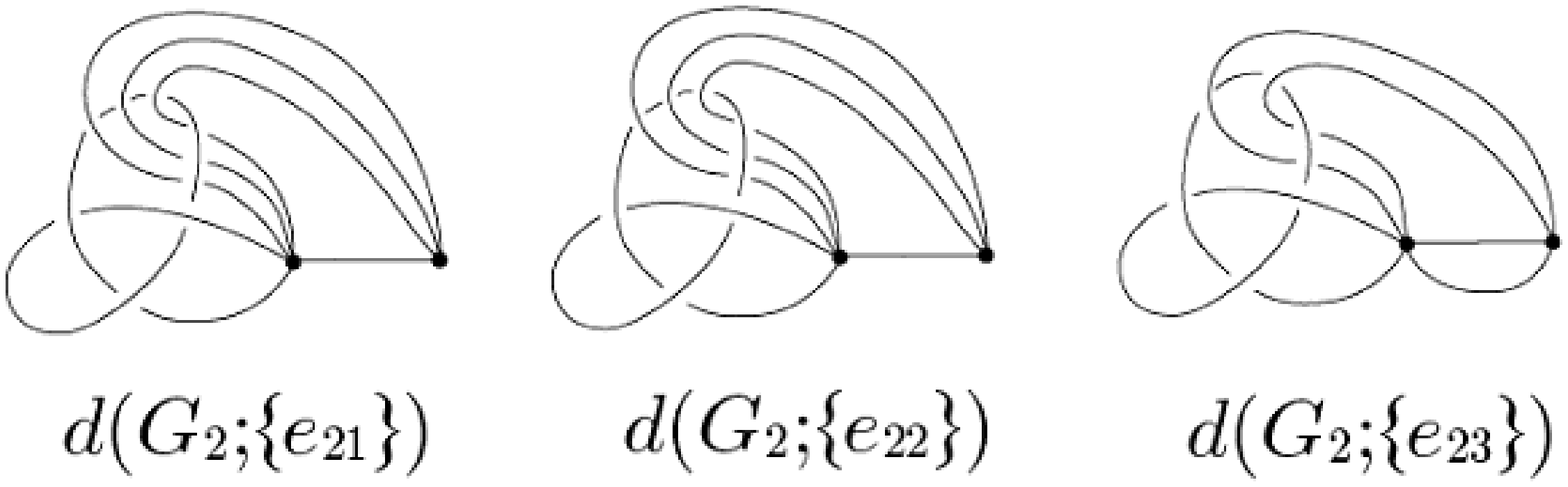}
    \caption{}
    \label{G2bcd}
  \end{center}
\end{figure}

\section*{Acknowledgments}

The first author was supported by JSPS KAKENHI Grant Number 16K17600.

\end{document}